\documentclass[preprint,3p,leqno,backref=page]{elsarticle}

\usepackage{amssymb}
\usepackage{amsmath}
\usepackage[english]{babel}
\usepackage[utf8]{inputenc}
\usepackage{enumerate}
\usepackage{graphicx}
\usepackage{amsthm}

\usepackage{xcolor,paralist,hyperref,titlesec,fancyhdr,etoolbox}
\newtheorem{theorem}{Theorem}[section]

\newtheorem{lemma}[theorem]{Lemma}

\newtheorem{remark}[theorem]{Remark}

\usepackage{amsmath,amsthm,amsfonts,amssymb,dsfont}
\usepackage{bbm}
\usepackage{multicol}
\usepackage{hyperref}
\usepackage{empheq}

\usepackage{marginnote}

\renewcommand*{\backrefalt}[4]{
	\ifcase #1 %
	\else $\uparrow$ {\footnotesize #2.}%
	\fi
	}

\newcommand{\weak}{\stackrel{\ast}{\rightharpoonup}}

\usepackage{mathtools}

\usepackage{enumitem}
\setenumerate[0]{label=(\alph*)}

\usepackage{comment}

\hypersetup{
    colorlinks,
    citecolor=black,
    filecolor=black,
    linkcolor=black,
    urlcolor=black
}
\usepackage{stackengine}
\usepackage{multicol}

\newcommand{\innerh}[2]{\ensuremath{ {\left( #1 , #2 \right)} }}
\newcommand{\innerv}[2]{\ensuremath{ {\left\langle #1 , #2 \right\rangle} }}

\renewcommand{\div}{\mathrm{div}\,}
\newcommand{\A}{\mathcal{A}}

\newcommand{\lip}{\mathrm{Lip}}
\newcommand{\ro}{\mathcal{R}}

\def\R{\mathbb{R}}
\def\N{\mathbb{N}}

\def\D{\mathcal{D}}
\def\L{\mathcal{L}}

\def\FF{\mathcal{F}}
\def\GG{\mathcal{G}}

\let\rref=\ref
\let\reqref=\eqref

\journal{arXiv}

\begin{document}

\begin{frontmatter}

%% Title, authors and addresses

%% use the tnoteref command within \title for footnotes;
%% use the tnotetext command for theassociated footnote;
%% use the fnref command within \author or \affiliation for footnotes;
%% use the fntext command for theassociated footnote;
%% use the corref command within \author for corresponding author footnotes;
%% use the cortext command for theassociated footnote;
%% use the ead command for the email address,
%% and the form \ead[url] for the home page:
%% \title{Title\tnoteref{label1}}
%% \tnotetext[label1]{}
%% \author{Name\corref{cor1}\fnref{label2}}
%% \ead{email address}
%% \ead[url]{home page}
%% \fntext[label2]{}
%% \cortext[cor1]{}
%% \affiliation{organization={},
%%            addressline={}, 
%%            city={},
%%            postcode={}, 
%%            state={},
%%            country={}}
%% \fntext[label3]{}

\title{Semilinear wave equations with time-dependent coefficients}                %% Article title

%% use optional labels to link authors explicitly to addresses:
\author[label1]{Nenad Antonić}
\ead{nenad@math.hr}
\author[label1]{Matko Grbac}
\ead{matko.grbac@math.hr}

\affiliation[label1]{organization={Department of Mathematics, Faculty of Science, University of Zagreb},
             addressline={Bijenička cesta 30},
             city={Zagreb},
             postcode={10\thinspace000},
%%             state={},
             country={Croatia}}

% \affiliation[label2]{organization={Department of Mathematics, Faculty of Science, University of Zagreb},
%           addressline={Bijenička cesta 30},
%             city={Zagreb},
%             postcode={10 000},
             %state={},
%             country={Croatia}}

%\author{Matko Grbac} %% Author name

%% Author affiliation
%\affiliation{organization={Department of Mathematics, Faculty of Science, University of Zagreb},%Department and Organization
 %           addressline={Bijenička cesta 30}, 
  %          city={Zagreb},
   %         postcode={10 000}, 
    %        %state={},
     %       country={Croatia}}

%% Abstract
\begin{abstract}
%% Text of abstract
We prove the existence of strong and weak solutions to the semilinear wave equation with coefficients depending both on time
and space variables, with continuous nonlinearity satisfying the sign condition.
The uniqueness is proven under slightly more restrictive assumptions.
Furthermore, the results obtained in abstract setting are illustrated on practical examples.
\end{abstract}

%% Keywords
\begin{keyword}
%% keywords here, in the form: keyword \sep keyword
semilinear wave equation \sep variable coefficients \sep initial-boundary value problem
%% PACS codes here, in the form: \PACS code \sep code

%% MSC codes here, in the form: \MSC code \sep code
\MSC 35L05 \sep 35L20 \sep 35L71

\end{keyword}

\end{frontmatter}

\section{Introduction}

In nineteen-fifties the semilinear wave equation with cubic nonlinearity was considered the simplest model
of Lorentz invariant nonlinear interaction in quantum field theory \cite{IS63}.
The mathematical question of existence of global smooth solutions in three space dimensions was resolved
already by K.~Jörgens
in 1961 \cite{KJ61}. In the same paper an unpublished result by J.-L.~Lions on the existence of weak solutions
(under weaker assumptions) was mentioned, relying on the results in \cite{L61} (see also \cite{JLLiWS65}),
while the existence and uniqueness of strong (strict) solutions was established in \cite{FB62}.
A nice survey of results for semilinear equations with d'Alembertian $\Box=\partial_{tt}-\triangle$ as the second-order
operator can be found in \cite{WS89} (see also \cite{MS92}).
These results prompted a natural question of establishing a more general framework of operator-theoretical character,
which was already present in works of J.-L.~Lions and collaborators.

In the theory of linear wave equations, discontinuous variable coefficients both in space and time were already considered 
by C.~Baiocchi \cite{CB67}, following Lions' ideas (see also \cite{HS68} and \cite{dST74}).
It was shown that they could be taken to be functions of bounded variation in $t$.
In this paper we shall limit our interest only to absolutely continuous dependence, the more general
case to be investigated in a future paper (cf.~\cite{G24}).

The semilinear wave equation represents a fundamental model in mathematical physics and engineering,
capturing phenomena ranging from wave propagation in materials to the dynamics of fields in spacetime.
Understanding the behavior of solutions to such equations is crucial for various applications and
theoretical investigations. Our goal is to present existence and uniqueness theory for semilinear
wave equations with variable coefficients of low regularity.

There are numerous results for variable coefficients independent of time $t$, multiplying the space derivatives (i.e.~for the
second order operator of the form $\partial_t^2 - \textsf{div}(\textbf{A}\nabla \cdot)$).  In particular, if one additionally assumes that
the matrix $\textbf{A}$ is symmetric and strictly positive, the problem can be rephrased as a problem for equation with d'Alembertian on a
Riemannian manifold with metric $g$ having the components of $\textbf{A}^{-1}$ \cite{PFY11}, being a powerfull technique in various
control problems (for classical results on the control of semilinear wave equation with d'Alembertian see \cite{Z90}).

However, our main motivation for this study is preparation to extend the results of homogenisation (cf.~\cite{BOFM92, CD11}),
as well as propagation of microlocal energy density (\cite{AL01, FM92, PG96}) to the semilinear framework with variable coefficients.

In the Second section we first prove the existence of strong solutions, by assuming higher regularity in $t$, using the Galërkin approximations
and paying additional attention to time-variable coefficients. 
The main results are stated and proven in the next section, by approximation. Finally, in the last section we show
how this can be applied to the wave equation, even with non-symmetric coefficients (for the importance of such materials cf.~\cite{BM10}).

\bigskip
%\vfill

\subsection{Notation}

For a bounded open set $\Omega \subseteq \R^d$ with a Lipschitz boundary $\Gamma$ and $T>0$,
following a general approach advocated by J.-L.~Lions (cf.~\cite{, LM72, DL92}), we consider the initial-boundary value
problem for the abstract second order semilinear wave equation on $(0,T) \times \Omega$ (let us stress that $\ro$ and $\A$ depend on time variable $t$)
\begin{equation}\label{eq:problem}
    \left\{\begin{aligned}
        (\ro u')'+\A u + \FF(u) &= f+g\\
        u|_{[0,T] \times \Gamma}&=0\\
        u(0)&=u_0\\
        u'(0)&=u_1.
    \end{aligned}
    \right.
\end{equation}

For brevity, we shall denote $H=\textup{L}^2(\Omega)$ and $V=\textup{H}^1_0(\Omega)$, with $V'=\textup{H}^{-1}(\Omega)$
being the dual of $V$, so that $(V,H,V')$ form the \textit{Gel'fand triplet}\/ satisfying
\[
V \hookrightarrow H=H' \hookrightarrow V' ,
\]
with both inclusions being dense and compact. The inner product in $H$ will be denoted by $(\cdot,\cdot)$
and the duality pairing between $V'$ and $V$  by $\langle \cdot,\cdot \rangle$.

We keep the standard notation for Lebesgue/Sobolev/B\^ochner-Sobolev spaces (cf.~\cite{AF03, GL17}), but the shorter notation
for the corresponding norms will be used. For example, $\|\cdot\|_{\textup{W}^{k,p}(X)}$ instead of
$\|\cdot\|_{\textup{W}^{k,p}(0,T;X)}$; this will take place when there is no confusion over the time interval in the domain.

We shall occasionally also use the standard notation for inequalities between two functions $f,g$
\[
    f \lesssim g    \;,
\]
meaning that there is a constant $C>0$ such that $f \leq Cg$, and likewise
\[
    f \lesssim_p g \;,
\]
if the constant $C$ depends on some parameter (or a set of parameters) $p$. 

The nonlinear part of the equation arises from function $F : \R \to \R$, and is given by $\FF(u)(t,x)=F(u(t,x))$.
We shall also sometimes use $\FF(u(t))$ or $\FF(u)(t)$ to denote the function $x\mapsto F(u(t,x))$ for a fixed $t$.

%\vfill
%\eject

\section{Strong solutions}

We begin this section by making precise our assumptions for problem \reqref{eq:problem}.
In the first step we shall obtain higher regularity of a solution by assuming higher regularity in (abstract) coefficients.

Let $T>0$ be fixed. For operator family $\ro$ we assume that,   for some $\alpha > 0$
\begin{equation}\label{eq:ro_properties}
    \left\{\begin{aligned}
    &\ro \in \textup{W}^{2,1}(0,T;\L(H))&\\
    &(\ro(t)u,v)=(\ro(t)v,u),  &u,v \in H, t\in[0,T] \\
    &(\ro(t)u,u)\geq \alpha\|u\|_H^2, &u \in H, t\in[0,T]   \quad,
    \end{aligned}
    \right.
\end{equation}
while for $\A$ we assume to be of the form $\A=\A_0+\A_1$, where
\begin{equation}\label{eq:a0_properties}
    \left\{\begin{aligned}
    &\A_0 \in \textup{W}^{2,1}(0,T;\L(V;V')) &\\
    &\langle \A_0(t)u,v\rangle = \langle \A_0(t)v,u \rangle, &u,v\in V, t\in[0,T] \\
    &\langle \A_0(t)u,u \rangle \geq \alpha \|u\|^2_V, &u \in V, t\in[0,T]
    \end{aligned}
    \right.
\end{equation}
and
\begin{equation}\label{eq:a1_space}
    \A_1 \in \textup{W}^{1,1}(0,T;\L(V;H)).\\
\end{equation}
Furthermore, for the right-hand side of equation \reqref{eq:problem} we assume that $f \in \textup{W}^{1,1}(0,T;H)$
and $g \in \textup{W}^{2,1}(0,T;V')$.

Finally, for the non-linear part, we first start with a more restrictive assumption on $F$, namely that it is Lipschitz continuous
with the constant denoted by $\lip (F)$, i.e.
\begin{equation}\label{eq:lip}
    |F(z)-F(w)|\leq \lip(F)|z-w|, \quad z,w \in \R.
\end{equation}
We further assume that $F$ satisfies the \textit{sign condition}
\begin{equation}\label{eq:sign}
     z F(z) \ge 0, \qquad z \in \R,
\end{equation}
denoting its primitive function by
\[G(z)=\int_0^z F(w)dw,\]
so that $G'= F$ and $G(0)=0$.

\begin{remark}\label{rem:remark_on_coefficients}
\begin{enumerate}
    \item 
        Given that we have continuous inclusion $\textup{W}^{1,1}(0,T;X) \hookrightarrow \textup{C}([0,T];X)$  for each Banach space $X$, $(\rref{eq:ro_properties}_1)$ together with $(\rref{eq:a0_properties}_1)$ allows us to deduce that $\ro\in\textup{C}^1([0,T];\L(H))$,
        $\A_0\in\textup{C}^1([0,T];\L(V;V'))$ and $\A_1\in \textup{C}([0,T];\L(V;H))$.
        In the same vein we can get that $f \in C([0,T];H)$ and $g \in C^1([0,T];V')$.
    
    %     When there is enough regularity in functions $u \in H$ and $v \in V$, standard formulae for the (weak) derivative of products $\ro u$ and $\A_0 v$ hold, where we denote $(\ro u)(t)=\ro(t)u(t)$ (analogously for $\A_0$).
        
    %     Namely, if $u$ belongs additionally to $\textup{W}^{1,1}(0,T;H)$, (a) implies $\ro'u$ and $\ro u' \in \textup{L}^1(0,T;H)$ and consequently we have $\ro u \in \textup{W}^{1,1}(0,T;H)$ with
    %     \[
    %     (\ro u)' = \ro ' u + \ro u'.
    %     \]
    %     while for $v \in \textup{W}^{1,1}(0,T;V)$ we have $\A_0 v \in \textup{W}^{1,1}(0,T;V')$ with
    %     \[
    %     (\A_0 v)'=\A_0'v+\A_0 v'.
    %     \]
    %     Of course, higher order derivatives are obtained inductively when there is more regularity in both of the factors.
    
    \item 
        Symmetric operators $\ro(t)$, for $t \in [0,T]$, are bounded both from above and below, and therefore the same holds true
        for their inverses.
        From the following operator identity (which is valid in $\L(H)$)
        \[
        \ro(t+h)^{-1}-\ro(t)^{-1} = \ro(t+h)^{-1}\left[\ro(t)-\ro(t+h)\right]\ro(t)^{-1},
        \]
        (after denoting the map $t \mapsto \ro(t)^{-1}$ by $\ro^{-1}$) we see that $\ro^{-1}$ is continuous.
        Additionally, from the very same operator identity it also easily follows that
        \[
        \frac{d}{dt}\left(\ro(t)^{-1}\right)=-\ro^{-1}(t)\ro'(t)\ro^{-1}(t),
        \]
        so that $\ro^{-1} \in \textup{C}^1([0,T];\L(H))$.
        
    \item 
        Note that the sign condition \reqref{eq:sign} also implies
\begin{equation}\label{eq:g>0}
    G(z) \geq 0,\qquad z \in \R \;,
\end{equation}
    while after taking into account the continuity of $F$ we have that $F(0)=0$. Hence, by using \reqref{eq:lip} we can obtain the bound
\[
|F(z)|\leq \lip (F)|z|, \qquad z \in \R,
\]
    as well as
\begin{equation*}
G(z) \leq \int_0^{|z|} |F(w)|dw \quad \lesssim_{F}|z|^2, \qquad z \in \R.
\end{equation*}
    This in turn implies that
\begin{equation}
    \|\FF(u)\|_{H} \lesssim_F \|u\|_H, \qquad u \in H,
\end{equation}
    and
\begin{equation}\label{eq:G_l2_bound}
    \|\GG(u)\|_{\textup{L}^1(\Omega)} \lesssim_F \|u\|_H, \qquad u \in H.
\end{equation}
Of course, $\GG$ is defined from $G$ in the same way as $\FF$ from $F$.

       % \item If we denote $\A=\A_0+\A_1 \in \textup{W}^{1,1}(0,T;\L(V;V'))$, we can think of $\A_0$ as a symmetric and coercive part of an operator $\A$, while $\A_1$ represents a possible non-symmetric part.
    % \item Finally, let us mention a slight change of notation which will be used throught the first chapter. For brevity, we will occasionally ommit $0,T$ when writing corresponding Bochner spaces and respective components, and write $\textup{W}^{k,p}(X)$ instead of $\textup{W}^{k,p}(0,T;X)$.
\end{enumerate}   
\end{remark}

Now we are ready to prove the existence result given by the next theorem.

\begin{theorem}\label{thm:existence_prvi}
    Consider $\ro, \A_0$ and $\A_1$  satisfying \reqref{eq:ro_properties}, \reqref{eq:a0_properties} and  $\reqref{eq:a1_space}$ respectively,
    and let ${\cal F}$ satisfy both \reqref{eq:lip} and \reqref{eq:sign}.
    Furthermore, take $u^0,u^1 \in V$, $f\in \textup{W}^{1,1}(0,T;H)$ and $g \in \textup{W}^{2,1}(0,T;V')$, such that
    $\A_0(0)u^0-g(0) \in H$. Then there exists a unique solution
    \[
        u \in \textup{W}^{1,\infty}(0,T;V) \cap \textup{W}^{2,\infty}(0,T;H)
    \]
    with 
    \[
        \GG(u) \in \textup{L}^\infty(0,T;\textup{L}^1(\Omega)) \quad \text{and} \quad \A_0u-g \in \textup{L}^\infty(0,T;H)
    \]
    of the equation
    \begin{equation}\label{eq:thm}
        (\ro u')'+\A u+\FF(u)=f+g \qquad \text{ in } \textup{L}^2(0,T;V')
    \end{equation}
    with initial conditions
    \[u(0)=u^0, \qquad u'(0)=u^1.\]
\end{theorem}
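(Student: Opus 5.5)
We will use a Faedo--Galërkin scheme combined with two levels of a priori estimates: an energy estimate and an estimate for the time-differentiated equation. Uniqueness will follow from a Gronwall argument that exploits the Lipschitz bound \reqref{eq:lip}.

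\textbf{Approximate problems.} Take an orthonormal basis $(w_k)$ of $H$ that is orthogonal in $V$. For instance, use eigenfunctions of the Dirichlet Laplacian. Let $V_m=\mathrm{span}\{w_1,\dots,w_m\}$. We seek $u_m(t)=\sum_{k\le m} c_{km}(t)w_k$ solving
\[
((\ro u_m')',w)+\langle\A u_m,w\rangle+(\FF(u_m),w)=(f,w)+\langle g,w\rangle ,\qquad w\in V_m .
\]
The initial data $u_m(0)\to u^0$ in $V$ and $u_m'(0)\to u^1$ in $V$ are chosen so that $u_m(0)$ also approximates $u^0$ in the sense needed below. The natural choice is the projection associated with $\A_0(0)$ (Ritz projection), so that the compatibility condition $\A_0(0)u^0-g(0)\in H$ passes to the Galërkin level uniformly in $m$.

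The Gram matrix $((\ro(t) w_j,w_k))$ is $C^1$ and uniformly invertible by \reqref{eq:ro_properties}. The map $c\mapsto(\FF(\sum c_kw_k),w_j)$ is Lipschitz by \reqref{eq:lip}. Hence the ODE system is linear plus globally Lipschitz with integrable coefficients, and Carathéodory theory gives a global solution on $[0,T]$.

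\textbf{First estimate.} Test with $u_m'$. Using the symmetry of $\ro$ and $\A_0$,
\[
\frac{d}{dt}\Big[\tfrac12(\ro u_m',u_m')+\tfrac12\langle\A_0u_m,u_m\rangle+\int_\Omega G(u_m)\Big]
=-\tfrac12(\ro'u_m',u_m')+\tfrac12\langle\A_0'u_m,u_m\rangle-(\A_1u_m,u_m')+(f,u_m')+\langle g,u_m'\rangle .
\]
The term with $g$ is integrated by parts in time, producing $\langle g,u_m\rangle$ and $\langle g',u_m\rangle$; this is where $g\in\mathrm{W}^{1,1}(V')$ is used. By coercivity, \reqref{eq:g>0}, and Gronwall with the $\mathrm{L}^1$ weights $\|\ro'\|,\|\A_0'\|,\|\A_1\|,\|f\|,\|g'\|$, we obtain bounds on $u_m$ in $\mathrm{L}^\infty(V)$, on $u_m'$ in $\mathrm{L}^\infty(H)$, and on $\GG(u_m)$ in $\mathrm{L}^\infty(\mathrm{L}^1)$.

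\textbf{Second estimate (main obstacle).} Differentiate the Galërkin equation in $t$; this is legitimate because the coefficients lie in $\mathrm{W}^{2,1}$. Then test with $u_m''$, writing $v=u_m'$. The identity becomes
\[
(\ro v')'+\ro'' v+\ro' v'+\A v+\A' u_m+\FF'(u_m)v=f'+g' ,
\]
where $\FF(u_m)'=F'(u_m)u_m'$ makes sense a.e. with $|F'|\le\lip(F)$. Since $\FF(u_m)\in\mathrm{W}^{1,\infty}$ in time with $\|(\FF(u_m))'\|_H\le\lip(F)\|u_m'\|_H$, it is better not to differentiate $F$ at all. Instead we keep $(\FF(u_m)',u_m'')$ and bound it by $\lip(F)\|v\|_H\|v'\|_H$.

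The same energy computation then controls $\|v'\|_H^2+\|v\|_V^2$. The terms $\langle\A_0'u_m,v'\rangle$ and $\langle g',v'\rangle$ are again integrated by parts in time, which is exactly where $\A_0,g\in\mathrm{W}^{2,1}$ are needed. The term $(\A_1 v,v')$ is fine since $\A_1\in\mathrm{L}^\infty(\L(V;H))$, and $(\A_1'u_m,v')$ is fine since $\A_1'\in\mathrm{L}^1$.

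The delicate point is the initial bound on $\|u_m''(0)\|_H$. From the equation at $t=0$,
\[
\ro(0)u_m''(0)=P_m\big(f(0)+g(0)-\A_0(0)u_m(0)-\A_1(0)u_m(0)-\ro'(0)u^1_m-\FF(u_m(0))\big).
\]
This is bounded in $H$ only because of the compatibility assumption $\A_0(0)u^0-g(0)\in H$ together with the Ritz choice of $u_m(0)$. This is the step to check most carefully: I would use projections $P_m$ that commute suitably with $\A_0(0)$, or first perturb $g$ so that the quantities match. Gronwall then gives $u_m'$ bounded in $\mathrm{L}^\infty(V)$ and $u_m''$ bounded in $\mathrm{L}^\infty(H)$.

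\textbf{Passage to the limit and uniqueness.} Extract weak-$\ast$ limits. By Aubin--Lions, $u_m\to u$ strongly in $\mathrm{C}([0,T];H)$, so $\FF(u_m)\to\FF(u)$ in $\mathrm{L}^2(H)$ by \reqref{eq:lip}. The linear terms pass to the limit directly, which yields \reqref{eq:thm} and the initial conditions.

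The regularity claims follow from the estimates. The bound $\GG(u)\in\mathrm{L}^\infty(\mathrm{L}^1)$ follows from the $\mathrm{L}^\infty(H)$ bound via Remark \ref{rem:remark_on_coefficients}(c). The bound $\A_0u-g=f-(\ro u')'-\A_1u-\FF(u)\in\mathrm{L}^\infty(H)$ is read off from the equation.

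For uniqueness, take the difference $w$ of two solutions, which has $w(0)=w'(0)=0$, and test with $w'$; this is allowed by the regularity obtained. Since $|(\FF(u_1)-\FF(u_2),w')|\le\lip(F)\|w\|_H\|w'\|_H$, the energy identity and Gronwall give $w=0$.
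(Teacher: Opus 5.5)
Your proposal is correct and follows the paper's overall structure. The steps match: Gal\"erkin approximation, a first energy estimate, an estimate for the time-differentiated equation with $\|u_m''(0)\|_H$ controlled through the compatibility condition, Gronwall, passage to the limit, and uniqueness by testing the difference with $w'$.

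The one genuine difference is at the step you flagged as delicate. The paper builds $u^0,u^1$ into the basis: $w_1,w_2$ are chosen to span them, and Laplacian eigenfunctions are then added with Gram--Schmidt. Hence $u_m(0)=u^0$ and $u_m'(0)=u^1$ exactly for $m\ge 2$. The term $\langle g(0)-\A_0(0)u_m(0),u_m''(0)\rangle$ is then literally $(g(0)-\A_0(0)u^0,u_m''(0))$, which is bounded by $\|\A_0(0)u^0-g(0)\|_H\|u_m''(0)\|_H$.

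Your Ritz choice yields the same identity by Gal\"erkin orthogonality. Since $\langle \A_0(0)(u_m(0)-u^0),w\rangle=0$ for every $w\in V_m$, it holds in particular for $w=u_m''(0)$. So your argument already closes, and you do not need the fallback options of commuting projections or perturbing $g$. A minor point: at $t=0$ the equation reads $P_m\ro(0)u_m''(0)=\dots$ rather than $\ro(0)u_m''(0)=\dots$, but testing with $u_m''(0)$ and using coercivity gives the bound anyway.

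The paper's device is lighter: there is no convergence of initial data to track. Yours has the advantage of working with any fixed basis, such as pure Dirichlet eigenfunctions.

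Your remaining deviations are harmless or slight improvements:
\begin{itemize}
\item You include $\int_\Omega G(u_m)$ in the energy instead of using the Lipschitz bound.
\item You treat $\FF(u_m)$ as Lipschitz in time into $H$ rather than writing $\FF'(u_m)u_m'$.
\item You obtain strong convergence in $\textup{C}([0,T];H)$ via Aubin--Lions from the bounds on $u_m$ and $u_m'$. This properly justifies the paper's assertion that weak-$\ast$ convergence in $\textup{L}^\infty(0,T;V)$ gives convergence in $\textup{L}^\infty(0,T;H)$, which on its own it does not.
\end{itemize}
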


%\bigskip

\begin{remark}
    \begin{enumerate}
        \item Note that the initial condition $u(0)=u^0$ makes sense for such a solution $u$; since $u \in \textup{W}^{1,\infty}(0,T;V)$, 
        it is also continuous, i.e.~$u \in \textup{C}([0,T];V)$.
        
        \item Since $u' \in \textup{W}^{1,\infty}(0,T;H)$, it follows that $u' \in \textup{C}([0,T];H)$ by the same argument.
        Hence the initial condition $u'(0)=u^1$ makes sense at least in the space $H$.
        %\item In fact, there is additional regularity (cf.~\cite[Theorem 9.3]{LM72}) which allows the interpretation
        %of equalities \dots
         % \item Given our assumptions on $\ro, \A_0,\A_1,\FF$, $f$ and $g$, the equation in \reqref{eq:problem} can be understood as equality in the sense of $\textup{L}^2(0,T;V')$. Indeed,
         % \[\|\FF(u)\|_{\textup{L}^2(0,T;H)} = \|\FF(u)-\FF(0)\|_{\textup{L}^2(0,T;H)} \leq \lip (F) \|u\|_{\textup{L}^2(0,T;H)}.\]
         % The conclusion then follows from the fact that \reqref{eq:thm} holds for each function in $\D(0,T)\boxtimes V$, the linear combinations of which are dense in $\textup{L}^2(0,T;V)$.
    \end{enumerate}
\end{remark}

\medskip

\subsection{Galërkin approximations}

In order to prove the existence of such a solution, we employ the Gal\"erkin method, with a carefully chosen basis.
We form an orthonormal basis $(w_k)$ for $H$ in the following way: first choose $w_1,w_2 \in V$ orthonormal such that
$\textup{span} \{u^0,u^1\} \leq \textup{span}\{w_1,w_2\}$, and then apply the Gram-Schmidt orthonormalisation procedure after adding
eigenvectors of $-\Delta$ on $H$ in order to obtain the desired orthonormal basis.

For any $m \in \N$, $m \geq 2$, denote $V_m=\text{span\,}\{w_1,\ldots,w_m\}$. Note that $u^0,u^1 \in V_m$, $m \geq 2$. We first show that there exists $u_m \in \textup{W}^{3,1}(0,T;V_m)$ which solves the problem projected to $V_m$

\begin{equation}\label{eq:proj}
\begin{cases}
\begin{aligned}
\innerh{\ro(t) u_m'(t)}{w_j} + \innerv{\A_0(t) u_m(t)}{w_j}\qquad\qquad\\
+\innerh{\A_1(t) u_m(t)}{w_j}+\innerh{\FF(u_m(t))}{w_j} &= \innerh{f(t)}{w_j}+\innerv{g(t)}{w_j},& \quad 1 \leq j \leq m\\
u_m(0) &= u^0 \\
u_m'(0)& = u^1. \\
\end{aligned}
\end{cases}
\end{equation}
We seek the solution of this projection in the form
\begin{equation}\label{eq:sol_form}
u_m(t)=\sum_{j=1}^m d_{mj}(t)w_j  .
\end{equation}
This leads to the following system of ordinary differential equations (for simplicity of notation, in the following 
computation we fix $m\in\N$ and do not write it explicitly as an index)
\begin{equation}\label{eq:ode}
    \frac{d}{dt}\biggl[\textbf{C}(t)\frac{d}{dt}\textbf{d}(t)\biggr]+\textbf{M}(t)\textbf{d}(t)+{\textbf{F}}(\textbf{d}(t))=\textbf{v}(t),
\end{equation}
where the above $m\times1$ and $m\times m$ matrices are given by their components
\begin{equation}
\begin{aligned}
\textbf{d}(t)     & =  [d_{j}](t)  \\
%\textbf{C}(t)_{ij}  & =  (\ro(t)w_j,w_i) \\
C_{ij}(t)           & =  (\ro(t)w_j,w_i) \\
%\textbf{M}(t)_{ij}  & =  \langle \A_0(t)w_j,w_i\rangle + (\A_1(t)w_j,w_i)\\
M_{ij}(t)           & =  \langle \A_0(t)w_j,w_i\rangle + (\A_1(t)w_j,w_i)\\
%\textbf{F}_j(a_1,\ldots,a_m)    & = \left(\FF\left(\sum_{i=1}^m a_iw_i\right),w_j \right)  \\
F_j(a_1,\ldots,a_m) & = \left(\FF\left(\sum_{i=1}^m a_iw_i\right),w_j \right)  \\
%\textbf{v}(t)_j & =  (f(t),w_j)+\langle g(t),w_j \rangle .
v_j(t)              & =  (f(t),w_j)+\langle g(t),w_j \rangle .
\end{aligned}
\end{equation}
Since $\textbf{C}(t)$ is a Gram matrix for orthogonal vectors $\sqrt{\ro(t)}w_j$, $j\in1..m$, it is invertible for each $t \in [0,T]$.
Moreover, by assumption $(\rref{eq:ro_properties}_1)$ on $\ro$ we have that $t \mapsto \textbf{C}(t)$ is in
$\textup{W}^{2,1}(0,T;\R^{m \times m})$, and in a similar fashion we deduce the same for its inverse $\textbf{C}(t)^{-1}$.
Next, using assumptions $(\rref{eq:a0_properties}_1)$ and $(\rref{eq:a1_space})$ on $\A_0$, $\A_1$ it can easily be seen that
$\textbf{M} \in \textup{W}^{1,1}(0,T;\R^{m\times m})$, and similarly that $\textbf{v} \in \textup{W}^{1,1}(0,T;\R^m)$.
Finally, since $F$ is Lipschitz, while $w_j$ are unit vectors in $H$, we have that $F_j$ are also Lipschitz.

%\[|\textbf{H}(\textbf{a}-\textbf{b})|=\left|\left(\FF((\textbf{a}-\textbf{b})\cdot \textbf{w}),\textbf{w} \right)\right| \leq\|\FF((\textbf{a}-\textbf{b})\cdot \textbf{w})\|_H\|\textbf{w}\|_H \lesssim_F |\textbf{a}-\textbf{b}|_\infty.\]

After introducing a new variable $\textbf{e}=\textbf{d}'$, \reqref{eq:ode} can be rewritten as an equivalent first order system of
ordinary differential equations
\begin{equation*}
    \begin{bmatrix}
        \textbf{I} & 0 \\ 0 & \textbf{C}
    \end{bmatrix}\frac{d}{dt}\begin{bmatrix} \textbf{d} \\ \textbf{e}\end{bmatrix}=
    \begin{bmatrix}
        0 & \textbf{I} \\ -\textbf{M} & -\textbf{C}'
    \end{bmatrix} \begin{bmatrix} \textbf{d} \\ \textbf{e}\end{bmatrix}+
    \begin{bmatrix}
        0 \\ \textbf{v}
    \end{bmatrix}- 
    \begin{bmatrix}
            0 \\ \textbf{F}(\textbf{d})
    \end{bmatrix},
\end{equation*}
which, due to the previous remark about invertibility of $\textbf{C}$ can further be written as
\begin{equation}\label{eq:ode_system}
    \frac{d}{dt}\begin{bmatrix} \textbf{d} \\ \textbf{e}\end{bmatrix}=
    \begin{bmatrix}
        \textbf{I} & 0 \\ 0 & \textbf{C}^{-1}
    \end{bmatrix}
    \left(
    \begin{bmatrix}
        0 & \textbf{I} \\ -\textbf{M} & -\textbf{C}'
    \end{bmatrix} \begin{bmatrix} \textbf{d} \\ \textbf{e}\end{bmatrix}+
    \begin{bmatrix}
        0 \\ \textbf{v}
    \end{bmatrix}- 
    \begin{bmatrix}
            0 \\ \textbf{F}(\textbf{d})
    \end{bmatrix}
    \right),
\end{equation}
subject to initial conditions
\begin{equation}
    \left\{\begin{aligned}
    d_j(0) & =  (u^0,w_j)\\
    e_j(0) & =  (u^1,w_j) \;.
    \end{aligned}
    \right.
\end{equation}
Since the right-hand side of system \reqref{eq:ode_system} is in $\textup{L}^2(0,T;\R^m)$, and it is Lipschitz continuous in $(\textbf{d},\textbf{e})$, the Carathéodory existence theorem hence yields a unique global absolutely continuous solution
$$
(\textbf{d}, \textbf{e}) \in \textup{W}^{1,1}(0,T;\R^m\times \R^m) \;.
$$
From the second equation in \reqref{eq:ode_system}
\[
\textbf{e}'=-\textbf{C}^{-1}\textbf{M}\textbf{d}-\textbf{C}^{-1}\textbf{C}'\textbf{e}
                                    +\textbf{C}^{-1}\textbf{v}-\textbf{C}^{-1}\textbf{F}(\textbf{d})
\]
we also deduce that $\textbf{e}' \in \textup{W}^{1,1}(0,T;\R^m)$. 
 % Since the product of two $\textup{W}^{1,1}(0,T)$ elements is once again $\textup{W}^{1,1}(0,T)$, we have that the first two factors, consisting of product of three matrices in $\textup{W}^{1,1}$, belong to $\textup{W}^{1,1}(0,T;\R^m)$ due to remarks that precede the introduction of the system \reqref{eq:ode_system}. For the last term, we have used the fact that $\textbf{H}$ is Lipschitz continuous (and hence differentiable almost everywhere) so that the composition $\textbf{H}(\textbf{d})$ satisfies $(\textbf{H}(\textbf{d}))'=\textbf{H}'(\textbf{d})\textbf{d}' \in \textup{L}^1(0,T;\R^m)$.
Hence, $\textbf{e} \in \textup{W}^{2,1}(0,T;\R^m)$, and subsequently, $\textbf{d} \in \textup{W}^{3,1}(0,T;\R^m)$.

Finally (now we reintroduce $m$ in our notation), this implies that $u_m$, being defined in \reqref{eq:sol_form},
is in $\textup{W}^{3,1}(0,T;V_m)$ and indeed a solution of \reqref{eq:proj}.

As a consequence, we also obtain 
\begin{equation}\label{eq:regularity_of_u_m}
\ro u_m' \in \textup{W}^{2,1}(0,T;H),\quad  \A_0 u_m \in \textup{W}^{2,1}(0,T;V') \quad\mbox{and}\quad  \A_1 u_m \in \textup{W}^{1,1}(0,T;H) \;.
\end{equation}

\medskip

\subsection{A priori estimates}\label{subsec:a_priori_estimates}

Next we shall obtain some a priori estimates for the sequence of solutions $(u_m)$. 
After multiplying $(\rref{eq:proj}_1)$ by $d_{mj}'(t)$ and summing in $j$ we obtain
\begin{equation}\label{eq:est1}
    ((\ro u_m')',u_m') + \langle \A_0 u_m,u_m' \rangle + (\A_1 u_m,u_m')+ (\FF(u_m),u_m')=(f,u_m')+\langle g,u_m'\rangle \;.
\end{equation}
We are going to use the following identities, which are valid because of the smoothness assumptions on $\ro$ and $\A_0$,
their symmetry $(\rref{eq:ro_properties}_2)$ and $(\rref{eq:a0_properties}_2)$, as well as \reqref{eq:regularity_of_u_m}:
\[
\begin{aligned}
( (\ro u_m')',u_m' ) & = \frac{1}{2}\Big( \frac{d}{dt} (\ro u_m',u_m') + (\ro'u_m',u_m') \Big) \nonumber \\
\langle \A_0 u_m,u_m'\rangle  & = \frac{1}{2}\Big(\frac{d}{dt}\langle \A_0u_m,u_m\rangle-\langle \A_0'u_m,u_m\rangle\Big) \;.\nonumber 
\end{aligned}
\]
After denoting the {\em energy}\/ at time $t$ by
\begin{equation}\label{eq:energy_m}
    E_m(t):=\frac{1}{2}(\ro(t) u_m'(t),u_m'(t))+\frac{1}{2}\langle \A_0(t)u_m(t),u_m(t)\rangle,
\end{equation}
equation \reqref{eq:est1} can be rewritten in the form
\begin{equation*}            %\label{eq:est2}
    E_m'(t) = -\frac{1}{2}(\ro'u_m',u_m')+\frac{1}{2}\langle \A_0'u_m,u_m\rangle-(\A_1u_m,u_m')-(\FF(u_m),u_m')+(f,u_m')+\langle g,u_m' \rangle \;.
\end{equation*}
We can now proceed with estimating the terms on the right hand side of the above equality: %\reqref{eq:est2}:
\begin{equation*}            % \label{eq:estimates_um_1}
    \begin{split}
    (\ro'(t)u_m'(t),u'_m(t)) &\leq \|\ro'(t)\|_{\L(H)} \|u_m'(t)\|_H^2 \quad\leq \quad \frac{\|\ro'(t)\|_{\L(H)}}{\alpha/2}E_m(t) \\[\jot]
    \langle \A_0'u_m(t),u_m(t)\rangle &\leq \|\A_0'(t)\|_{\L(V;V')}\|u_m(t)\|_V^2 \quad\leq \quad\frac{\|\A_0'(t)\|_{\L(V;V')}}{\alpha/2}E_m(t) \\[\jot]
    (\A_1(t)u_m,u_m'(t)) &\leq \|\A_1(t)\|_{\L(V;H)}(\|u_m'(t)\|_H^2+\|u_m(t)\|_V^2) \quad \leq \quad \frac{\|\A_1(t)\|_{\L(V;H)}}{\alpha/2}E_m(t) \\[\jot]
    (\FF(u_m(t)),u_m'(t)) &\leq \|\FF(u_m(t))\|_H\|u_m'(t)\|_H \quad \leq \quad  \frac{\lip(F)}{\alpha/2} E_m(t) \\[\jot]
    (f(t),u_m'(t)) &\leq \|f(t)\|_{H}(1+\|u_m'(t)\|_H^2) \quad \leq \quad \|f(t)\|_H+\frac{\|f(t)\|_H}{\alpha/2}E_m(t) \;,
    \end{split}
\end{equation*}
%
%From \reqref{eq:estimates_um_1} it follows
which gives that for each $0 \leq t \leq T$
\[E_m'(t) \leq \phi(t)E_m(t)+\|f(t)\|_H+\langle g(t),u_m'(t)\rangle,\]
where we have introduced an auxiliary function $\phi\in  \textup{L}^1(0,T)$ (let us note that it does not depend on solutions $u_m$)
\[
\phi(t)=\frac{2}{\alpha}\Bigl(\|\ro'(t)\|_{\L(H)}+\|\A_0'(t)\|_{\L(V;V')}+\|\A_1(t)\|_{\L(V;H)}+\|f(t)\|_H+\lip(F)\Bigr) \;.
\]
After integrating the previous estimate on $E_m'(t)$ from $0$ to $t$ we obtain
\[ 
E_m(t)  \leq  E_m(0)+\int_0^t \phi(s)E_m(s)ds+\int_0^t \|f(s)\|_H ds + \int_0^t      \langle g(s),u_m'(s)\rangle ds\Big.  \;.
\]
% First we estimate the term $\exp{\Big(\int_0^t \phi(s) \, ds\Big)}$, namely the exponent:
%     \begin{align*}
%        \int_0^t & \Bigl(\|\ro'(s)\|_{\L(H)}+\|\A_0'(s)\|_{\L(V;V')}+ \|\A_1(s)\|_{\L(V;H)}+\|f(s)\|_H\Bigr) \, ds\\[\jot]
%      \leq &\int_0^T \Bigl(\|\ro'(s)\|_{\L(H)}+\|\A_0'(s)\|_{\L(V;V')}+\|\A_1(s)\|_{\L(V;H)}+\|f(s)\|_H\Bigr) \, ds \\[\jot]
%      \leq &\|\ro'\|_{\textup{L}^1(\L(H))}+\|\A_0'\|_{\textup{L}^1(\L(V;V'))}+\|\A_1\|_{\textup{L}^1(\L(V;H))}+\|f\|_{\textup{L}^1(H)}\\[\jot]
%      \leq &\|\ro\|_{\textup{W}^{1,1}(\L(H))}+\|\A_0\|_{\textup{W}^{1,1}(\L(V;V'))}+\|\A_1\|_{\textup{L}^1(\L(V;H))}+\|f\|_{\textup{L}^1(H)}.
%     \end{align*}
% Hence, the first term is bounded by
% \[M:=\exp{\frac{2}{\alpha}\Big(\|\ro\|_{\textup{W}^{1,1}(\L(H))}+\|\A_0\|_{\textup{W}^{1,1}(\L(V;V'))}+\|\A_1\|_{\textup{L}^1(\L(V;H))}+\|f\|_{\textup{L}^1(H)}\Big)}.\]
% For the second factor we first have an obvious estimate
% \[\int_0^t \|f(s)\|_H ds \leq \|f\|_{\textup{L}^1(H)}.\]

In order to estimate the last term $\int_0^t \langle g,u_m' \rangle$, we apply the integration by parts formula and estimate the
obtained expression by using the elementary Young inequality with arbitrary (for the time being) constant $\kappa > 0$
\begin{equation}\label{eq:g_estimates_partial_integration}
    \begin{split}
     \int_0^t \langle g(s),u_m'(s)\rangle ds & = \langle g(t),u_m(t) \rangle - \langle g(0),u_m(0)\rangle - \int_0^t \langle g'(s), u_m(s) \rangle ds\\[\jot]
    &\leq 2\kappa \|g\|^2_{\textup{W}^{1,1}(V')}+\frac{1}{4\kappa}\left(\|u_m(t)\|^2_V+\|u_m(0)\|^2_V\right)    %\\[\jot]
%    &\quad
        + \int_0^t \|g'(s)\|_{V'}\|u_m(s)\|_V ds\\[\jot]
    &\leq 2\kappa \|g\|^2_{\textup{W}^{1,1}(V')} +\frac{1}{2\kappa\alpha}E_m(t)+\frac{1}{2\kappa\alpha}E_m(0)   %\\[\jot]
%    &\quad
        +\int_0^t \|g'(s)\|_{V'}(1+\|u_m(s)\|_V^2) ds\\[\jot]
    &\leq 2\kappa\|g\|^2_{\textup{W}^{1,1}(V')}+\|g'\|_{\textup{L}^1(V')} +\frac{1}{2\kappa\alpha}E_m(t)    %\\[\jot]
%    &\quad
        +\frac{1}{2\kappa\alpha}E_m(0)+\int_0^t \frac{\|g'(s)\|_{V'}}{\alpha/2}E_m(s)ds \,.
    \end{split}
\end{equation}
In this way we obtain the inequality
\begin{equation*}
    \begin{split}
        \left(1-\frac{1}{2\kappa\alpha}\right)E_m(t) & \leq \left(1+\frac{1}{2\kappa\alpha}\right)E_m(0)\\[\jot]
        &\qquad +2\kappa\|g\|_{\textup{W}^{1,1}(V')}^2+\|g'\|_{\textup{L}^1(V')}+\|f\|_{\textup{L}^1(H)}    %\\[\jot]
%        & \qquad
        + \int_0^t \tilde{\phi}(s)E_m(s)ds,
    \end{split}
\end{equation*}
where $\tilde{\phi}(s):=\phi(s)+\frac{2}{\alpha}\|g'(s)\|_{V'} \in \textup{L}^1(0,T)$ is also independent of solutions $u_m$.
At this point we can choose $\kappa$ such that $1 \geq \frac{1}{2\kappa\alpha} \geq \frac{1}{2}$. 

Finally, the initial term
\[E_m(0)=\frac{1}{2}(\ro u_m'(0),u_m'(0))+\frac{1}{2}\langle \A_0u_m(0),u_m(0) \rangle\]
can easily be estimated using the initial conditions
\[
    E_m(0) \leq \frac{2}{\alpha}\Bigl(\|\ro\|_{\textup{W}^{1,1}(\L(H))}\|u^1\|_H^2+\|\A_0\|_{\textup{W}^{1,1}(\L(V;V'))}\|u^0\|_V^2\Bigr) \;.
\]
By combining all the above pieces  we get the inequality
\begin{align*}
    E_m(t) \lesssim_{\alpha,\ro,\A,F,f,g}  \;1+\int_0^t \tilde{\phi}(s)E_m(s)ds.
\end{align*}

In order to obtain the sought a priori estimate,  we apply Gronwall's inequality once more, thus obtaining the inequality
\[
E_m(t) \lesssim_{\alpha,\ro,\A,F,f,g} \exp \left(\int_0^t \tilde{\phi}(s) ds\right) 
        \lesssim_{\alpha,\ro,\A,F,f,g} \exp \|\tilde{\phi}\|_{\textup{L}^1(0,T)} \;,
\]
which gives the desired uniform bound on $E_m(t)$. Consequently, we have obtained that 
\begin{equation*}
    \begin{split}
        &(u_m) \text{ is a bounded sequence in } \textup{L}^\infty(0,T;V)\;,\\
        &(u_m') \text{ is a bounded sequence in } \textup{L}^\infty(0,T;H)  \;.
    \end{split}
\end{equation*}

Our next goal is to obtain a priori estimates on the second time derivatives of sequence $(u_m)$.
To this end we return to the approximate equation
\begin{equation}\label{eq:solution_of_projected_for_each_v}
((\ro u_m')',v)+\langle \A_0u_m,v \rangle + (\A_1u_m,v)+(\FF(u_m),v)=(f,v)+\langle g,v \rangle \;,
\end{equation}
valid for every $v \in V_m$. We differentiate it with respect to $t$ (there is enough smoothness for this to be valid due to \reqref{eq:regularity_of_u_m}, as well as our initial assumptions) and insert $v=u_m''(t)$.
Thus, for each $0 \leq t \leq T$, we have got the equality
\begin{equation}\label{eq:estimates_um''}
    ((\ro u_m')'',u_m'')+\langle (\A_0u_m)',u_m'' \rangle + ((\A_1u_m)',u_m'')+((\FF(u_m))',u_m'')=(f',u_m'')+\langle g',u_m'' \rangle \;.
\end{equation}

The terms on the left hand side can be transformed in the following way
\begin{equation}
    \begin{split}
        ((\ro u_m')'',u_m'') & = (\ro''u_m',u_m'')+2(\ro'u_m'',u_m'')+(\ro u_m''',u_m'')\\[\jot]
        & = \frac{1}{2}\frac{d}{dt}(\ro u_m'',u_m'')+\frac{3}{2}(\ro'u_m'',u_m'')+(\ro''u_m',u_m'')\\[3\jot]
        \langle (\A_0 u_m)',u_m''\rangle & = \langle \A_0'u_m,u_m''\rangle+\langle \A_0u_m',u_m''\rangle\\[\jot]
        & = \frac{1}{2}\frac{d}{dt} \langle \A_0 u_m',u_m' \rangle -\frac{1}{2}\langle \A_0'u_m',u_m'\rangle+\langle \A_0'u_m,u_m''\rangle\\[3\jot]
        ((\A_1u_m)',u_m'') &= (\A_1 u_m',u_m'')+(\A_1'u_m,u_m'')\\[3\jot]
        ((\FF(u_m))',u_m'') &= (\FF'(u_m)u_m',u_m'') \;.
    \end{split}
\end{equation}
%\bigskip
After denoting (this expression is of the same form as energy $E_m$ in \reqref{eq:energy_m}, with $u_m$ replaced by $u_m'$)
\[
\Tilde{E}_m(t):=\frac{1}{2}(\ro(t) u_m''(t),u_m''(t))+\frac{1}{2}\langle \A_0(t) u_m'(t),u_m'(t) \rangle,
\]
we are able to rewrite \reqref{eq:estimates_um''} in the following expanded form
\begin{equation*}       %\label{eq:estimates_um''_2}
    \begin{split}
        \Tilde{E}_m'(t) &=\frac{d}{dt}\frac{1}{2}(\ro u_m'',u_m'')+\frac{d}{dt}\frac{1}{2}\langle \A_0 u_m',u_m' \rangle\\[\jot]
        &=((\ro u_m')'',u_m'')-\frac{3}{2}(\ro'u_m'',u_m'')-(\ro''u_m',u_m'')\\[\jot]
        &\quad + \langle (\A_0 u_m)',u_m''\rangle+\frac{1}{2}\langle \A_0'u_m',u_m'\rangle-\langle \A_0'u_m,u_m''\rangle\\[\jot]
        &=(f',u_m'')+\langle g',u_m''\rangle-((\A_1 u_m)',u_m'')-((\FF(u_m))',u_m'')\\[\jot]
        &\quad -\frac{3}{2}(\ro'u_m'',u_m'')-(\ro''u_m',u_m'')+\frac{1}{2}\langle \A_0'u_m',u_m'\rangle-\langle \A_0'u_m,u_m''\rangle\\[\jot]
        &=(f',u_m'')+\langle g',u_m''\rangle\\[\jot]
        & \quad - \frac{3}{2}(\ro'u_m'',u_m'')-(\ro''u_m',u_m'') +\frac{1}{2}\langle \A_0'u_m',u_m'\rangle-\langle \A_0'u_m,u_m''\rangle\\[\jot]
        &\quad -(\A_1 u_m',u_m'')-(\A_1'u_m,u_m'')-(\FF'(u_m)u_m',u_m'').
    \end{split}
\end{equation*}
What remains to be done is to estimate the terms appearing on the right hand side of    %\reqref{eq:estimates_um''_2}.
this equality. To this end, we estimate them one by one obtainig
\begin{equation*}           %\label{eq:estimates_um''_3}
    \begin{split}
        (f'(t),u_m''(t)) &\leq \|f'(t)\|_H\left( 1+\frac{1}{\alpha/2}\Tilde{E}_m(t)\right)\\[2\jot]
        (\ro'(t)u_m''(t),u_m''(t)) & \leq \frac{\|\ro'(t)\|_{\L(H)}}{\alpha/2}\Tilde{E}_m(t)\\[2\jot]
        (\ro''(t)u_m'(t),u_m''(t)) & \leq \|\ro''(t)\|_{\L(H)}\left(\|u_m'(t)\|_H^2+\|u_m''(t)\|_H^2\right)\\[2\jot]
        &\leq \frac{\|\ro''(t)\|_{\L(H)}}{\alpha/2}\Tilde{E}_m(t)\\[\jot]
        \langle \A_0(t)u_m'(t),u_m'(t) \rangle &\leq \frac{\|\A_0(t)\|_{\L(V;V')}}{\alpha/2}\Tilde{E}_m(t)\\[2\jot]
        (\A_1(t)u_m'(t),u_m''(t)) &\leq \frac{\|\A_1(t)\|_{\L(V;H)}}{\alpha/2}\Tilde{E}_m(t)\\[2\jot]
        (\A_1'(t)u_m(t),u_m''(t)) & \leq \|\A_1'(t)\|_{\L(V;H)}(\|u_m(t)\|_V^2+\|u_m''(t)\|_H^2)\\[\jot]
        & \leq \frac{\|\A_1'(t)\|_{\L(V;H)}}{\alpha/2}\left(E_m(t)+\Tilde{E}_m(t)\right)\\[2\jot]
        (\FF'(u_m(t))u_m'(t),u_m''(t)) & \leq \frac{\lip (F)}{\alpha/2}\Tilde{E}_m(t) \;.
    \end{split}
\end{equation*}
Combining these estimates with the equality and recalling the uniform bound on $E_m(t)$ obtained in the previous step, we reduce
%\reqref{eq:estimates_um''_2} to
it to the inequality
\begin{equation*}
    \begin{split}
        \Tilde{E}_m'(t) \leq \frac{1}{\alpha/2}\Tilde{\Tilde{\phi}}(t)\Tilde{E}_m(t)+\frac{\|E_m\|_{\textup{L}^\infty(0,T)}}{\alpha/2}\|\A_1'(t)\|_{\L(V;H)}+\|f'(t)\|_H+\langle g',u_m''\rangle - \langle \A_0'u_m,u_m''\rangle,
    \end{split}
\end{equation*}
where function $\Tilde{\Tilde{\phi}}\in \textup{L}^1(0,T)$ is given by
\begin{equation*}
%\begin{split}
    \Tilde{\Tilde{\phi}}(t):=\|\ro'(t)\|_{\L(H)}+\|\ro''(t)\|_{\L(H)}+\|\A_0(t)\|_{\L(V;V')}       %\\[2\jot]
    +\|\A_1(t)\|_{\L(V;H)}+\|\A_1'(t)\|_{\L(V;H)}+\lip (F) \;.
%\end{split}
\end{equation*}
Integrating the inequality from $0$ to $t\leq T$ we obtain
\begin{equation}            \label{eq:estimates_um''_reduced}
    \begin{split}
        \Tilde{E}_m(t) &\leq \Tilde{E}_m(0)+\frac{1}{\alpha/2}\int_0^t\Tilde{\phi}(s)\Tilde{E}_m(s)ds +\frac{\|E_m\|_{\textup{L}^\infty(0,T)}}{\alpha/2}\int_0^t \|\A_1'(s)\|_{\L(V;H)}ds\\[2\jot]
        &\quad +\int_0^t \|f'(s)\|_H ds+\int_0^t \langle g'(s),u_m''(s)\rangle ds-\int_0^t \langle \A_0'(s)u_m(s),u_m''(s)\rangle ds.
    \end{split}
\end{equation}
The last two terms are transformed via integration by parts:
\begin{equation}\label{eq:estimates_um''_0}
    \begin{split}
        \int_0^t \langle g'(s),u_m''(s)\rangle ds &= - \int_0^t \langle g''(s),u_m'(s)\rangle ds\\[2\jot]
        &\quad +\langle g'(t),u_m'(t) \rangle - \langle g'(0),u_m'(0) \rangle\\[2\jot]
        &\leq \int_0^t \|g''(s)\|_{V'}\left(1+\frac{1}{\alpha/2}\Tilde{E}_m(s)\right)ds\\[2\jot]
        &\quad + \frac{1+\alpha}{2\alpha}\|g'\|^2_{\textup{W}^{1,1}(V')}+\frac{1}{4}\Tilde{E}_m(t)+\frac{1}{2}\Tilde{E}_m(0),
    \end{split}
\end{equation}
and
\begin{equation}
    \begin{split}
        \int_0^t \langle \A_0'(s)u_m(s),u_m''(s)\rangle ds &= - \int_0^t \langle \A_0''(s)u_m(s),u_m'(s)\rangle ds-\int_0^t \langle \A_0(s)u_m'(s),u_m'(s)\rangle ds\\[2\jot]
        &\quad + \langle \A_0'(t)u_m(t),u_m'(t)\rangle -\langle \A_0'(0)u_m(0),u_m'(0)\rangle\\[2\jot]
        &\leq \frac{1}{\alpha/2}\int_0^t \|\A_0''(s)\|_{\L(V;V')}\left(E_m(s)+\Tilde{E}_m(s)\right)ds\\[2\jot]
        &\quad +\int_0^t \|\A_0\|_{\L(V;V')}\left(1+\frac{1}{\alpha/2}\Tilde{E}_m(s)\right)ds\\[2\jot]
        &\quad +\frac{1+\alpha}{2\alpha}\|\A_0'\|^2_{\textup{W}^{1,1}(\L(V;V'))}+\frac{1}{4}\Tilde{E}_m(t)+\frac{1}{2}\Tilde{E}_m(0)   \;.
    \end{split}
\end{equation}
Next we need an estimate of the term 
\begin{equation*}
   \begin{split}
       \Tilde{E}_m(0) &=\frac{1}{2}(\ro(0)u_m''(0),u_m''(0))+\frac{1}{2}\langle \A_0(0)u_m'(0),u_m'(0)\rangle\\[2\jot]
       &\leq \frac{1}{2}\|\ro\|_{\textup{W}^{1,1}(\L(H))}\|u_m''(0)\|_H^2+\frac{1}{2}\|\A_0\|_{\textup{W}^{1,1}(\L(V;V'))}\|u_m'(0)\|_V^2,
   \end{split} 
\end{equation*}
which obviously boils down to estimating the term $\|u_m''(0)\|_H$, since $u_m'(0)=u^1$ is trivially bounded in $V$.
In order to do so, we return back to the approximate equation \reqref{eq:solution_of_projected_for_each_v},
take $t=0$ and insert $v=u_m''(0)$, thus obtaining
\begin{equation}
    \begin{split}
        (\ro(0)u_m''(0),u_m''(0))&=(f(0),u_m''(0))+\langle g(0)-\A_0(0)u_m(0),u_m''(0)\rangle\\[2\jot]
        &\quad -(\ro'(0)u_m'(0),u_m''(0))- (\A_1(0)u_m(0),u_m''(0))-(\FF(u_m(0)),u_m''(0)).
    \end{split}
\end{equation}
At $t=0$ we have the following estimates (valid for some generic constant $C>0$)
\begin{equation}\label{eq:estimates_u_m''_4}
    \begin{split}
        (f(0),u_m''(0))&\leq C\|f\|_{\textup{W}^{1,1}(H)}^2+\frac{\alpha}{8}\|u_m''(0)\|_H^2\\[2\jot]
        (\ro'(0)u_m'(0),u_m''(0)) & \leq C\|\ro\|^2_{\textup{W}^{2,1}(\L(H))}\|u_m'(0)\|_V^2+\frac{\alpha}{8}\|u_m''(0)\|_H^2\\[2\jot]
        ( \A_1(0)u_m(0),u_m''(0) ) & \leq C\|\A_1\|^2_{\textup{W}^{1,1}(\L(V;H))}\|u_m(0)\|_V^2+\frac{\alpha}{8}\|u_m''(0)\|_H^2\\[2\jot]
        (\FF(u_m(0)),u_m''(0)) & \leq C\lip (F)^2\|u_m(0)\|^2_H+\frac{\alpha}{8}\|u_m''(0)\|_H^2    \;.
    \end{split}
\end{equation}
Recalling the initial condition ($\rref{eq:proj}_2$) for $u_m$, we easily get $\A_0(0) u_m(0)-g(0)=\A_0(0)u^0-g(0)$, which is
valid in $V'$. However, by the assumption of the theorem the right hand side is in $H$, so we also have
\begin{equation}\label{eq:estimates_u_m''_5}
    \langle g(0)-\A_0(0)u_m(0),u_m''(0) \rangle \leq C\|\A_0(0)u^0-g(0)\|_H^2+\frac{\alpha}{8}\|u_m''(0)\|_H^2.
\end{equation}
Collecting \reqref{eq:estimates_u_m''_4}--\reqref{eq:estimates_u_m''_5} we obtain the estimate
\begin{equation}\label{eq:estimates_u_m''_final}
    \begin{split}
        \|u_m''(0)\|_H^2 &\leq \frac{1}{\alpha}(\ro(0)u_m''(0),u_m''(0))\\[2\jot]
        & \leq C\Big(\|f\|_{\textup{W}^{1,1}(H)}^2+\|\ro\|^2_{\textup{W}^{2,1}(\L(H))}\|u^1\|_V^2+\|\A_0(0)u^0-g(0)\|_H^2\\[2\jot]
        &\quad +\|\A_1\|^2_{\textup{W}^{1,1}(\L(V;H))}\|u^0\|_V^2+\lip (F)^2\|u^0\|^2_H\Big)+\frac{5}{8}\|u_m''(0)\|_H^2,  
    \end{split}
\end{equation}
from where we deduce that
\begin{equation}\label{eq:u_m''_bounded}
    \|u_m''(0)\|_H \text{ is uniformly bounded}.
\end{equation}
This allows us to finally obtain the uniform bound on $\Tilde{E}_m$, via an application of the Gronwall inequality to \reqref{eq:estimates_um''_reduced} after taking into account estimates given by \reqref{eq:estimates_um''_0}--\reqref{eq:u_m''_bounded}.

\medskip

\subsection{The existence of a solution}

We are now in a position to prove the existence of a solution to \reqref{eq:thm}. Let $u_m$ be a solution to projected problem \reqref{eq:proj}. 
By using the above a priori estimates, we have that the sequence of solutions $(u_m)$ satisfies the following:
\begin{equation*}
\begin{split}
    (u_m) \text{ is bounded in }& \textup{L}^\infty(0,T;V)\\
    (u_m') \text{ is bounded in }& \textup{L}^\infty(0,T;V)\\
    (u_m'') \text{ is bounded in }& \textup{L}^\infty(0,T;H).
\end{split}
\end{equation*}

Therefore we can extract a subsequence (which we still denote by $(u_m)$) satisfying (as the derivative is 
a continuous operator on $\D'$) 
\begin{equation}\label{eq:cvg_projected_solutions}
\begin{aligned}
    u_m & \stackrel{\ast}{\rightharpoonup}  u \quad &\text{ in } \textup{L}^\infty (0,T;V)&\\
    u_m' & \stackrel{\ast}{\rightharpoonup}  u' \quad &\text{ in } \textup{L}^\infty(0,T;V)&\\
    u_m'' &\weak u'' \quad &\text{ in } \textup{L}^\infty(0,T;H)&.
\end{aligned}
\end{equation}
Additionally, we have
\begin{equation}\label{eq:cvg_linear_part}
\begin{split}
    \ro u_m' &\xrightharpoonup{\ast} \ro u' \quad \text{ in } \textup{L}^\infty(0,T;H) \\[2\jot]
    (\ro u_m')' &\weak (\ro u')' \quad \text{ in } \textup{L}^\infty(0,T;H)  \\[2\jot]
    \A_0 u_m &\stackrel{\ast}{\rightharpoonup} \A_0 u \quad \text{ in } \textup{L}^\infty(0,T;V') \\
    \A_1 u_m &\stackrel{\ast}{\rightharpoonup}  \A_1 u \quad \text{ in } \textup{L}^\infty(0,T;H).
\end{split}
\end{equation}
Lastly, we examine the convergence of nonlinear part $\FF(u_m)$. 
By convergence in $(\ref{eq:cvg_projected_solutions}_1)$, which implies the convergence in $\textup{L}^\infty(0,T;H)$,
uniformly for $t \in [0,T]$ up to a set of measure zero we have
\[\|\FF(u_m(t))-\FF(u(t))\|_H  \leq \lip(\FF) \|u_m(t)-u(t)\|_{H} \longrightarrow 0\:,\]
which gives
\begin{equation}\label{eq:F_conv}
    \FF(u_m) \longrightarrow \FF(u) \qquad \text{ in } \textup{L}^\infty(0,T;H).
\end{equation}

Take $n \in \N$ and $v_n \in V_n$, and let $m \geq n$. Since $u_m$ is a solution of projected problem \reqref{eq:proj} on $V_m$, which contains $V_n$, we have for each $t \in [0,T]$
\begin{equation*}       %\label{eq:solution_m_on_V_n}
    ((\ro(t) u_m'(t))',v_n)+\langle \A_0(t) u_m(t),v_n \rangle
    +(\A_1 (t)u_m(t),v_n)+(\FF(u_m(t)),v_n)=(f(t),v_n)+\langle g(t),v_n \rangle.        %\nonumber
\end{equation*}
Multiplying the above equality  %\reqref{eq:solution_m_on_V_n} 
by a cut-off function $\vartheta \in \D(0,T)$ and integrating over $[0,T]$ we obtain for $\psi_n := \vartheta \boxtimes v_n$
\begin{equation}\label{eq:solution_m}
    \int_0^T \langle (\ro u_m')', \psi_n\rangle + \langle \A_0 u_m,\psi_n \rangle + (\A_1u_m,\psi_n)+ (\FF(u_m),\psi_n)\, dt = \int_0^T (f,\psi_n)+\langle g,\psi_n \rangle dt.
\end{equation}
We then pass to the limit $m \to \infty$, while using \reqref{eq:cvg_projected_solutions}, \reqref{eq:cvg_linear_part} and \reqref{eq:F_conv} to obtain 
\begin{equation}\label{eq:solution}
    \int_0^T ((\ro u')',\psi_n) + \langle \A_0 u,\psi_n \rangle + (\A_1u,\psi_n) + (\FF(u),\psi_n)\, dt = \int_0^T (f,\psi_n)+\langle g,\psi_n \rangle dt.
\end{equation}
Since $\text{span}\{ \D(0,T)\boxtimes \bigcup_n V_n\}$ is dense in $\textup{L}^2(0,T;V)$, we can now deduce that
\begin{equation*}
    \int_0^T -(\ro u',\psi') + \langle \A_0 u,\psi \rangle + (\A_1u,\psi) + (\FF(u),\psi)\, dt = \int_0^T (f,\psi)+\langle g,\psi \rangle dt
\end{equation*}
holds for each $\psi \in \textup{L}^2(0,T;V)$, or in other words, we have that the equality
\[
(\ro u')'+\A u+\FF(u)=f+g
\]
holds in the sense of $\D'(0,T;V')$. From here, due to the density of smooth vector functions from $\D(0,T;V)$ in $\textup{L}^2(0,T;V)$,
which is a predual of $\textup{L}^2(0,T;V')$, we deduce \reqref{eq:thm}.

The fact that $\GG(u) \in \textup{L}^\infty(0,T;\textup{L}^1(\Omega))$ follows from \reqref{eq:G_l2_bound} and \reqref{eq:cvg_projected_solutions}, providing the estimate
\[\|\GG(u(t))\|_H \lesssim_F\|u(t)\|_H^2 \lesssim 1, \qquad t \in [0,T].\]
Furthermore, directly from the equation we obtain
\[
    \A_0u-g=f+g-(\ro u')'-\A_1u-\FF(u) \in \textup{L}^\infty(0,T;H).
\]

It remains to be shown that $u$ satisfies the initial conditions. First we check that $u'(0)=u^1$.

Take a cut-off function $\varphi \in \textup{C}^\infty([0,T])$ such that $\varphi(0)=1$ and $\varphi(T)=0$ and take $v \in V_m$ for some $m$.
Denote their tensor product by $\psi=\varphi \boxtimes v \in \textup{C}^\infty([0,T];V)$, and insert it into \reqref{eq:solution},
thus obtaining
\begin{equation*}\label{eq:iv_1}
    \int_0^T ( (\ro u')',\psi ) + \langle \A_0 u,\psi \rangle+(\A_1u,\psi)+(\FF(u),\psi) \, dt = \int_0^T (f,\psi)dt.
\end{equation*}
After integrating by parts in the first term it follows
\begin{equation*}\label{eq:iv_2}
    \int_0^T -(\ro u',\psi') + \langle \A_0 u,\psi \rangle+(\A_1u,\psi)+(\FF(u),\psi) \, dt + (\ro u'(0),\psi(0))  = \int_0^T (f,\psi)dt.
\end{equation*}
On the other hand, by inserting $\psi$ into \reqref{eq:solution_m} and once again integrating by parts we get
\begin{equation}\label{eq:iv_3}
    \int_0^T -((\ro u_m'),\psi') + \langle \A_0 u_m,\psi \rangle+ (\A_1u_m,\psi)+(\FF(u_m),\psi) \, dt + (\ro u_m'(0),\psi(0)) = \int_0^T (f,\psi)dt.
\end{equation}
Now recall that $u_m'(0) = u^1$, and take into account  \reqref{eq:cvg_linear_part} and \reqref{eq:F_conv},
in order to obtain
\begin{equation}\label{eq:iv_4}
     \int_0^T (\ro u',\psi') + \langle \A_0 u,\psi \rangle+(\A_1 u, \psi)+(\FF(u),\psi) \, dt + (\ro(0)u^1,\psi(0))  = \int_0^T (f,\psi)dt.
\end{equation}
Comparing \reqref{eq:iv_3} and \reqref{eq:iv_4} we deduce
\[(\ro u'(0),v)=(\ro(0)u^1,v).\]
Since $m$ and $v \in V_m$ were arbitrary, we conclude that
$$\ro(0) (u'(0)-u^1)=0,$$
and since $\ro(0)$ is an isomorphism of $H$, we conclude $u'(0)=u^1$.
In order to prove $u(0)=u^0$, we additionally assume $\varphi'(0)=1$ and $\varphi'(T)=0$ and integrate by parts once more in first terms of both \reqref{eq:iv_3} and \reqref{eq:iv_4}, while using symmetricity of $\ro$ to obtain
\begin{equation*}
    \int_0^T (\ro u',\psi')dt = -\int_0^T (u,(\ro \psi')')dt - (u(0),\ro\psi (0)),
\end{equation*}
\begin{equation*}
    \int_0^T (\ro u_m',\psi')dt = -\int_0^T (u_m,(\ro \psi')')dt - (u_m(0),\ro\psi (0)).
\end{equation*}
Recalling that $u_m(0) = u^0$ and $\reqref{eq:cvg_linear_part}_1$ we get by comparison
\[(u(0),\ro(0)v)=(u^0,\ro(0)v).\]
Once again, using the fact that $\ro(0)$ is an isomorphism of $H$, as well as that $m, v$ were arbitrary, we finally conclude
\[u(0)=u^0.\]

\bigskip
Finally, we prove the uniqueness of the solution. Assume $u_1$ and $u_2$ are two solutions of the problem, and denote $u:=u_1-u_2$. 
We wish to prove that $u \equiv 0$ is the only solution to the problem
\[\begin{cases}
\begin{aligned}
(\ro u')'+ \A u+\FF(u_1)-\FF(u_2)&=0,\\
u(0)=u'(0)&=0,
\end{aligned}
\end{cases}\]
Since $u' \in \textup{L}^2(0,T;V)$, we can use it as a test function for the equation and obtain
\begin{equation}\label{eq:unique_eq}
    ((\ro u')',u')+\langle \A_0 u,u'\rangle + (\A_1 u,u') + (\FF(u_1)-\FF(u_2),u')=0.
\end{equation}
The nonlinear term satisfies
\begin{align*}
    (\FF(u_1)-\FF(u_2),u') &\leq \lip(\FF) \|u_1(t)-u_2(t)\|_{H}\|u'(t)\|_H\\[2\jot]
    &\leq\lip(\FF)\big(\|u(t)\|_\textup{V}^2+\|u'(t)\|_H^2\big) \:,
\end{align*}
thus allowing us to rewrite \reqref{eq:unique_eq} as
\begin{align*}
    \frac{1}{2}\frac{d}{dt}\left((\ro(t) u'(t),u'(t))+\langle \A_0 (t)u(t),u(t)\rangle\right) &= (\ro'(t)u',u'(t))+\langle \A_0'(t)u(t),u(t)\rangle\\[2\jot]
    &\quad +2(\A_1(t) u(t),u'(t))+(\FF(u_2(t))-\FF(u_1(t)),u'(t))\\[2\jot]
    &\lesssim_{\rho,\A,F,\alpha} (\ro u',u')+\langle\A_0 u, u \rangle.
\end{align*}
Applying the Gronwall lemma, combined with the fact that $u(0)=u'(0)=0$, gives $u \equiv 0$

%\vfill
%\eject

\section{Weak solutions}

After proving the existence of strong solutions in the previous section, we shall weaken the assumptions on the regularity
of coefficients in the wave equation, as well as for  initial conditions and the nonlinear part.
Namely, we assume the following
\begin{equation}\label{eq:assumptions_drugi}
\begin{split}
    &\ro \in \textup{W}^{1,1}(0,T;\L(H)) \text{ and satisfies } (\ref{eq:ro_properties}_{2,3})\\
    &\A_0 \in \textup{W}^{1,1}(0,T;\L(V,V')) \text{ and satisfies } (\ref{eq:a0_properties}_{2,3})\\
    &\A_1 \in \textup{L}^1(0,T;\L(V;H))\\
    &f \in \textup{L}^1(0,T;H)\\
    &g \in \textup{W}^{1,1}(0,T;V')\\
    &F : \R \to \R \text{ is  a continuous function satisfying the sign condition}\\
    &u^0 \in V \text{ and } u^1 \in H.
\end{split}
\end{equation}
Additionally, we shall assume that $\A_0(0)$ is an isomorphism of $V$ and $V'$ and that
\begin{equation}\label{eq:g_iv}
    \GG(u^0) \in \textup{L}^1(\Omega)
\end{equation}
holds, where $\GG$ is defined as in the previous section (Remark \rref{rem:remark_on_coefficients} (c)).

The result we shall prove is precisely stated in the following theorem.

\begin{theorem}\label{thm:existence_drugi}
     Under the above assumptions, there exists a solution $u \in \textup{L}^\infty(0,T;V)$, with $u' \in \textup{L}^\infty(0,T;H)$
     such that $\GG(u) \in \textup{L}^\infty(0,T; \textup{L}^1(\Omega))$,     satisfying the equation 
    \begin{equation}\label{eq:thm_drugi}
            (\ro u')'+ \A u+\FF(u) = f+g \qquad \text{ in\/ } \textup{L}^1(0,T;\textup{L}^1(\Omega)+V'),
    \end{equation}
    with initial conditions
    \[u(0)=u^0, \qquad  u'(0)=u^1.\]
\end{theorem}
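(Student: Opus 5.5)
We prove Theorem \ref{thm:existence_drugi} by approximation, reducing to Theorem \ref{thm:existence_prvi}. We regularise every datum so that the hypotheses of the strong theorem hold, derive an energy estimate that is uniform in the regularisation parameter, and pass to the limit. The only genuinely nonlinear step is the limit in $\FF(u_n)$, which we handle by a Strauss-type argument (compactness plus equi-integrability coming from the sign condition).

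\emph{Approximation.} Mollify in $t$ to get $\ro_n\in\textup{W}^{2,1}(0,T;\L(H))$ and $\A_{0,n}\in \textup{W}^{2,1}(0,T;\L(V;V'))$, with $\ro_n\to\ro$ and $\A_{0,n}\to\A_0$ in $\textup{W}^{1,1}$. Symmetry and coercivity are kept, with $\alpha$ replaced by $\alpha/2$ for large $n$, because $\textup{W}^{1,1}$ convergence is uniform in $t$. Similarly choose $\A_{1,n}\in \textup{W}^{1,1}(0,T;\L(V;H))$ with $\A_{1,n}\to\A_1$ in $\textup{L}^1$, $f_n\in\textup{W}^{1,1}(0,T;H)$ with $f_n\to f$ in $\textup{L}^1(0,T;H)$, and $g_n\in \textup{W}^{2,1}(0,T;V')$ with $g_n\to g$ in $\textup{W}^{1,1}(0,T;V')$. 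For the nonlinearity take $F_n$ Lipschitz with the sign condition, $|F_n|\le|F|$, $0\le G_n\le G$, and $F_n\to F$ locally uniformly; a truncation such as $F_n=\max(\min(F,n),-n)$ followed by a Lipschitz regularisation preserving the sign works. For the initial data take $u^1_n\in V$ with $u^1_n\to u^1$ in $H$. For $u^0_n$, use that $\A_0(0)$ is an isomorphism $V\to V'$ and define
\[
u^0_n:=\A_{0,n}(0)^{-1}\bigl(h_n+g_n(0)\bigr), \qquad h_n\in H,\ h_n\to \A_0(0)u^0-g(0)\ \text{in } V'.
\]
Then $\A_{0,n}(0)u^0_n-g_n(0)\in H$ and $u^0_n\to u^0$ in $V$. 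We also need $\int_\Omega G_n(u^0_n)\to\int_\Omega G(u^0)$, or at least boundedness; here $|F_n|\le n$ makes $G_n$ grow at most linearly, and we use a diagonal choice of $u^0_n$ close to $u^0$. Theorem \ref{thm:existence_prvi} then gives strong solutions $u_n$.

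\emph{Uniform estimate.} Test the $n$-th equation with $u_n'$, which is legitimate at the strong level, and include the term $(\FF_n(u_n),u_n')=\frac{d}{dt}\int_\Omega G_n(u_n)$ in the energy:
\[
E_n(t)=\tfrac12(\ro_nu_n',u_n')+\tfrac12\langle\A_{0,n}u_n,u_n\rangle+\int_\Omega G_n(u_n).
\]
Since $G_n\ge0$, the computation of \S\ref{subsec:a_priori_estimates} goes through without the $\lip(F)$ term. The $g$ term is treated by integration by parts exactly as in \eqref{eq:g_estimates_partial_integration}. Gronwall then bounds $u_n$ in $\textup{L}^\infty(V)$, $u_n'$ in $\textup{L}^\infty(H)$ and $\GG_n(u_n)$ in $\textup{L}^\infty(\textup{L}^1)$, uniformly in $n$.

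\emph{Passage to the limit and main obstacle.} Extract weak-$\ast$ limits. By Aubin--Lions, $u_n\to u$ strongly in $\textup{C}([0,T];H)$, hence a.e. on $(0,T)\times\Omega$ along a subsequence, so $F_n(u_n)\to F(u)$ a.e. The linear terms converge in $\D'$ because coefficients converge strongly and unknowns converge weakly-$\ast$; for instance $\ro_nu_n'\to\ro u'$ weakly, since $\ro_n\to\ro$ uniformly. The key step is $\FF_n(u_n)\to\FF(u)$ in $\textup{L}^1((0,T)\times\Omega)$. We use the Strauss inequality: for $|z|\ge k$, the sign condition gives $|F_n(z)|\le \frac{1}{k}\,zF_n(z)$. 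So it suffices to bound $\int_0^T\!\int_\Omega u_nF_n(u_n)$ uniformly. This follows by testing the equation with $u_n$:
\[
\int_0^T (u_n,\FF_n(u_n))=\text{linear terms}+\Bigl[(\ro_nu_n',u_n)\Bigr]_0^T+\int_0^T(\ro_nu_n',u_n'),
\]
where every term is bounded by the energy estimate. On $\{|u_n|\le k\}$ the functions $F_n$ are uniformly bounded, so $\{\FF_n(u_n)\}$ is equi-integrable, and Vitali's theorem gives the $\textup{L}^1$ convergence. Consequently the equation holds in $\textup{L}^1(0,T;\textup{L}^1(\Omega)+V')$. By Fatou, $\GG(u)\in\textup{L}^\infty(\textup{L}^1)$. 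The initial conditions are recovered as in the strong case: $u_n\to u$ in $\textup{C}([0,T];H)$ gives $u(0)=u^0$. For $u'(0)=u^1$, note that $(\ro_n u_n')'$ is bounded in $\textup{L}^1(\textup{L}^1+V')$, so $\ro_nu_n'$ is equicontinuous in a weak sense. Testing against $\varphi\boxtimes v$ with $v\in V\cap\textup{L}^\infty$ and $\varphi(0)=1$, $\varphi(T)=0$, then identifies $\ro(0)u'(0)=\ro(0)u^1$.
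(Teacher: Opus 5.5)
Your proof is correct and follows the paper's architecture: regularise the data, apply Theorem~\ref{thm:existence_prvi}, keep $\int_\Omega G_n(u_n)$ in the energy, obtain a.e.\ convergence from Aubin--Lions, and get $\FF_n(u_n)\to\FF(u)$ in $\textup{L}^1((0,T)\times\Omega)$ by bounding $\int_0^T(\FF_n(u_n),u_n)\,dt$ through testing with $u_n$. Your Vitali argument with $|F_n(z)|\le zF_n(z)/k$ for $|z|\ge k$ is exactly the content of the Strauss theorem the paper cites.

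The genuine difference is how you control the initial nonlinear energy $\int_\Omega G_n(u^0_n)$.

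\emph{The paper's route.} It truncates the datum, $u^0_j=\xi_j(u^0)$, using \cite{KS00} for convergence in $V$. It then uses $F_k\to F$ uniformly on $[-j,j]$, dominates $G(u^0_j)\le G(u^0)$ via the monotonicity of $G$ on each half-line, and needs two diagonal extractions: one in $k$, and one in $p$ via $\lip(F_j)$.

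\emph{Your route.} You build the ordering into the nonlinearity. If $|F_n|\le\min(|F|,n)$, then $0\le G_n\le G$ and $G_n$ is globally $n$-Lipschitz, so
\[
\int_\Omega G_n(u^0_n)\,dx\le\int_\Omega G(u^0)\,dx+n\,\|u^0_n-u^0\|_{\textup{L}^1(\Omega)} .
\]
Choosing $h_n\in H$ close enough to $\A_{0,n}(0)u^0-g_n(0)$ in $V'$ that $\|u^0_n-u^0\|_V=o(1/n)$ then finishes the job. This uses one index and no truncation of $u^0$. Your definition $u^0_n=\A_{0,n}(0)^{-1}(h_n+g_n(0))$ also removes the need for $\A_{0p}(0)=\A_0(0)$ and $g_p(0)=g(0)$ in Lemma~\ref{lemma:approx}. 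Compactness in $\textup{C}([0,T];H)$ gives $u(0)=u^0$ directly, where the paper only says ``standard''.

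\emph{The price.} Your approximation of $F$ must be order-preserving, which is more than the cited approximation lemma provides, so you should say how you obtain it. For instance, take $F_n(z)=\inf_{w\ge0}\{\min(F(w),n)+n|z-w|\}$ for $z\ge0$ and $F_n(z)=\sup_{w\le0}\{\max(F(w),-n)-n|z-w|\}$ for $z\le0$. These are $n$-Lipschitz, satisfy the sign condition and $|F_n|\le\min(|F|,n)$, and converge locally uniformly. Plain mollification would not work, since it can destroy both the sign condition near $0$ and the bound $|F_n|\le|F|$.

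A minor slip: the boundary term in your test with $u_n$ should carry a minus sign, which does not affect the bound.
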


\medskip

\begin{remark}\label{rem:initial_conditions}
    Note that such a solution $u$ belongs to the space $\textup{W}^{1,\infty}(0,T;H)$; therefore the initial condition $u(0)=u^0$ makes sense in $H$. On the other hand, from the equation we easily deduce
    \[
        \ro u' \in \textup{W}^{1,1}(0,T;\textup{L}^1(\Omega)+V'),
    \]
    so the second initial condition is to be interpreted in the following sense: $\ro u'$ is absolutely continuous with values in
    $\textup{L}^1(\Omega)+V'$, and the equality $(\ro u')(0)=\ro(0)u^1$ holds in\/ $\textup{L}^1(\Omega)+V'$.
\end{remark}

\bigskip

\subsection{Approximating problems}

The proof of the above theorem relies on approximating input data with more regular ones, so that the results
of second section can be applied. Then, a natural step is to show that these approximating solutions have an accumulation point,
and finally proving, by passing to the limit, that the accumulation point is a solution.

We first state the following approximation lemma for the nonlinear part (the interested reader can find the proof in \cite[Lemma 2.2.]{WS70}). 

\begin{lemma} 
A continuous function $F: \R \to \R$ satisfying the sign condition can be locally uniformly approximated by a sequence of Lipschitz continuous functions also satisfying the sign condition.
\end{lemma}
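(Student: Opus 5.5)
We construct the approximants explicitly: first a Lipschitz function $F_n$ agreeing with $F$ on $[-n,n]$ up to error $1/n$, then a cut that restores the sign condition. Fix $n\in\N$. Since $F$ is uniformly continuous on $[-n-1,n+1]$, we can take a modulus of continuity there. Let $P_n$ be the piecewise linear interpolant of $F$ on a fine uniform grid of $[-n-1,n+1]$ that contains $0$. We choose the mesh so small that $|P_n-F|\le 1/n$ on $[-n-1,n+1]$. Outside this interval we extend $P_n$ by constants. Then $P_n$ is Lipschitz, being piecewise linear with finitely many pieces.

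Interpolation at the grid points already preserves the sign condition. At a grid node $z$ we have $P_n(z)=F(z)$. Since $F(0)=0$ by Remark \rref{rem:remark_on_coefficients}(c), and the sign condition gives $F\ge 0$ on $[0,\infty)$ and $F\le 0$ on $(-\infty,0]$, the nodal values satisfy $P_n(z)\ge0$ for nodes $z\ge0$ and $P_n(z)\le0$ for nodes $z\le0$. Because $0$ is a node, each linear piece lies entirely in $[0,\infty)$ or in $(-\infty,0]$. On such a piece $P_n$ is a convex combination of two nodal values of the correct sign, hence has the correct sign itself. The constant extensions inherit the sign of the endpoint values $F(\pm(n+1))$. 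Therefore $zP_n(z)\ge0$ for all $z\in\R$.

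As a safeguard that avoids relying on the node at $0$, one can instead define
\[
F_n(z)=\max\{P_n(z),0\}\ \text{ for } z\ge0,\qquad F_n(z)=\min\{P_n(z),0\}\ \text{ for } z<0.
\]
This is Lipschitz on each half-line, and it is continuous at $0$ since $P_n(0)=F(0)=0$. It therefore remains globally Lipschitz and satisfies the sign condition. The error does not grow: for $z\ge0$ we have $F(z)\ge0$, and the map $y\mapsto\max\{y,0\}$ is $1$-Lipschitz, so $|F_n(z)-F(z)|\le|P_n(z)-F(z)|$. The case $z<0$ is symmetric.

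The last step is convergence. Given a compact set $K\subset\R$, we have $K\subset[-n,n]$ for all large $n$, and then $\sup_K|F_n-F|\le 1/n\to0$. This is local uniform convergence, as claimed. The only delicate point is keeping the sign condition and the Lipschitz property together. Both the node at $0$ and the one-sided truncation handle this, because both rely on $F(0)=0$. No Lipschitz bound uniform in $n$ is needed, or could be expected in general.
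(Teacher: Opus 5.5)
Your proof is correct, and it differs from the paper only in that the paper gives no argument of its own: it simply refers to \cite[Lemma 2.2.]{WS70}. Your proposal is therefore a self-contained replacement rather than a variant of something in the text.

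Piecewise linear interpolation on a symmetric uniform grid of $[-n-1,n+1]$ with $0$ as a node, extended by constants, does everything needed:
\begin{itemize}
\item the error is bounded by the modulus of continuity of $F$ on that interval;
\item global Lipschitz continuity, which is what Theorem \ref{thm:existence_prvi} requires, comes from having finitely many linear pieces and the constant extension;
\item the sign condition is inherited because each cell lies in one closed half-line, where $P_n$ is a convex combination of nodal values $F(z_i)$ of the correct sign (using $F(0)=0$).
\end{itemize}

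The citation buys brevity. Your construction buys explicitness, and it makes visible that $\lip(F_n)$ may blow up with $n$. That is harmless for how the paper later uses the approximants. The energy $E_{jpk}$ of Section 3 contains $\|\GG_k(u_{jpk})\|_{\textup{L}^1(\Omega)}$, so the nonlinear term cancels in the energy identity. The constant $\lip(F_j)$ is only invoked for fixed $j$.

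One correction to your \emph{safeguard} paragraph: the one-sided truncation does not avoid relying on the node at $0$. Its continuity at $0$ uses $P_n(0)=F(0)=0$, which is exactly the nodal property. If $0$ were not a node and $P_n(0)>0$, the truncated function would equal $P_n(0)$ at $0$ but vanish just to the left of $0$, so it would jump there. Since your main construction already has $0$ as a node, that paragraph is redundant and can be dropped.
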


Denote the members of such a sequence by $F_k$, and analogously as before let us define $G_k$ to be corresponding primitive functions
$G_k(z):=\int_0^z F_k(w)dw$. Then sequences $(F_k)$ and $(G_k)$ satisfy 
properties analogous to \reqref{eq:lip}--\reqref{eq:G_l2_bound} with constants $\lip(F_k)$.

Next, since $u^0$ is not necessarily bounded, we approximate it by a sequence of functions $u^0_j$ given by
\[u^0_j(x)=\xi_j(u^0(x)), \qquad \text{a.e. } x \in \Omega,\]
where $\xi_j$ is defined by
\begin{equation*}
    \xi_j(x):=\begin{cases}
        -j,& x<-j\\
        x,& |x|\leq j\\
        j,& x>j.
    \end{cases}
\end{equation*}
Due to Corollary A.5. of \cite{KS00}, this sequence satisfies
\begin{equation*}
    u^0_j \longrightarrow u^0 \quad   \text{ strongly in } V,
\end{equation*}
as well as
\begin{equation}\label{eq:u0j_properties}
\begin{split}
    &|u^0_j(x)| \leq |u^0(x)| \quad \text{for a.e }x \in \Omega\\[2\jot]
    &u^0_j(x) u^0(x) \geq 0 \quad \text{for a.e }x \in \Omega\\[2\jot]
    &\|u^0_j\|_V \leq \|u^0\|_V \quad j \in \N.
\end{split}
\end{equation}
Additionally, we can extract a subsequence (still dentoted by $u_j^0$) such that we also have convergence almost everywhere
\begin{equation}\label{eq:u0_cvg}
    u^0_j \longrightarrow u^0               \quad  a.e.\: x\in\Omega \:.
\end{equation}
We now proceed to approximate the linear part of the equation in a suitable way. First, let the sequences (indexed by $p\in\N$):
$(\A_{1p})$ in $\textup{W}^{1,1}(0,T;\L(V;H))$, $(u^1_p)$ in $V$ and $(f_p)$ in $\textup{W}^{1,1}(0,T;H)$ satisfy
the following convergences
\begin{equation}\label{eq:cvg_fpgp}
    \begin{array}{rcll}
        %\ro_p & \rightarrow & \ro &\qquad \text{ strongly in } \textup{W}^{1,1}(0,T;\L(H))\\[2\jot]
        %\A_{0p} & \rightarrow \A_{0} \qquad \text{ strongly in } \textup{W}^{1,1}(0,T; \L(V;V'))\\[2\jot]
        \A_{1p} & \rightarrow & \A_1    &\qquad \text{ strongly in } \textup{L}^1(0,T;\L(V;H))\\[2\jot]
        f_p &\rightarrow & f            &\qquad \text{ strongly in } \textup{L}^1(0,T;H)\\[2\jot]
        u^1_p &\rightarrow & u^1        &\qquad \text{ strongly in } H.
    \end{array}
\end{equation}
For the remaining linear terms, we need approximating sequences with some additional properties, stated below.

\begin{lemma}\label{lemma:approx}
    There exist sequences $\A_{0p} \in \textup{W}^{2,1}(0,T;\L(V;V')), \ro_{p} \in \textup{W}^{2,1}(0,T;\L(H))$ and $g_p \in \textup{W}^{2,1}(0,T; V')$ 
    satisfying
    \begin{equation*}
        \begin{split}
        \ro_p & \rightarrow \ro \qquad \text{ strongly in } \textup{W}^{1,1}(0,T;\L(H))\\[2\jot]
        \A_{0p} & \rightarrow \A_{0} \qquad \text{ strongly in } \textup{W}^{1,1}(0,T; \L(V;V'))\\[2\jot]
        g_p & \rightarrow g \qquad \text{ strongly in } \textup{W}^{1,1}(0,T;V'),
        \end{split}
    \end{equation*}
    with
    \begin{equation*}
        \begin{split}
            \ro_{p}(0)&=\ro(0)\\
            \A_{0p}(0)&=\A_0(0)\\
            g_p(0)&=g(0)
        \end{split} \qquad \text{ for each } p \in \N
    \end{equation*}
    and
    \begin{equation*}
        \begin{split}
            \innerh{\ro_p(t)u}{u} \geq \frac{\alpha}{2}\|u\|_H^2, \qquad u \in H\\
            \innerv{\A_{0p}(t)v}{v} \geq \frac{\alpha}{2}\|v\|^2_V, \qquad v \in V
        \end{split}
    \end{equation*}
    independently of both $t \in [0,T]$ and $p \in \N$.
\end{lemma}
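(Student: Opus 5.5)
The construction approximates the derivatives and then integrates back from the fixed initial value. This gives the conditions at $t=0$ for free, keeps the $\textup{W}^{1,1}$ convergence, and makes coercivity follow from uniform closeness. Take $\ro$ first, as $\A_0$ and $g$ are handled identically (for $g$ no coercivity is needed). Since $\ro' \in \textup{L}^1(0,T;\L(H))$ and $\textup{C}^1([0,T];\L(H))$ (or even $\textup{W}^{1,1}$) functions are dense in the Bochner space $\textup{L}^1(0,T;\L(H))$, choose $\sigma_p \in \textup{C}^1([0,T];\L(H))$ with $\sigma_p \to \ro'$ in $\textup{L}^1(0,T;\L(H))$. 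For instance, take simple functions and mollify them after extension by zero. This density in Bochner spaces holds for any Banach space; separability of $\L(H)$ is not needed, since Bochner integrable functions are a.e. limits of simple functions by definition. Then define
\[
\ro_p(t) := \ro(0) + \int_0^t \sigma_p(s)\,ds .
\]
Clearly $\ro_p \in \textup{W}^{2,1}(0,T;\L(H))$ and $\ro_p(0)=\ro(0)$. Moreover, $\ro_p' = \sigma_p \to \ro'$ in $\textup{L}^1$, while $\sup_t\|\ro_p(t)-\ro(t)\|_{\L(H)} \le \|\sigma_p-\ro'\|_{\textup{L}^1(\L(H))} \to 0$. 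So $\ro_p \to \ro$ in $\textup{W}^{1,1}(0,T;\L(H))$, and in fact uniformly on $[0,T]$.

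Symmetry must be preserved, since it is needed when the results of the second section are applied. So I will replace $\sigma_p$ by its symmetric part $\frac12(\sigma_p+\sigma_p^*)$. Because $\ro'(t)$ is symmetric for a.e.\ $t$ (it is a limit of difference quotients of symmetric operators), we have
\[
\bigl\|\tfrac12(\sigma_p+\sigma_p^*)-\ro'\bigr\| \le \|\sigma_p-\ro'\|,
\]
so the convergence is unaffected. Adjoint-taking is an isometry, so regularity is preserved. For $\A_0$ the same is done in $\L(V;V')$, with the adjoint $B\mapsto B^*$, $\langle B^*u,v\rangle=\langle Bv,u\rangle$, which is again an isometric involution.

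Coercivity follows from uniform convergence. Choose $p_0$ such that $\sup_t\|\ro_p(t)-\ro(t)\|_{\L(H)}\le \alpha/2$ for $p\ge p_0$. Then
\[
(\ro_p(t)u,u)\ge (\ro(t)u,u)-\tfrac{\alpha}{2}\|u\|_H^2 \ge \tfrac{\alpha}{2}\|u\|_H^2
\]
for all $t$, and likewise for $\A_{0p}$ in the $\L(V;V')$ norm. Discarding the first $p_0$ terms (and reindexing) gives bounds independent of $t$ and $p$. For $g$, take $\gamma_p\in \textup{C}^1([0,T];V')$ with $\gamma_p\to g'$ in $\textup{L}^1(0,T;V')$ and set $g_p(t)=g(0)+\int_0^t\gamma_p$.

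The only step needing care is the density of smooth (at least $\textup{W}^{1,1}$) functions in $\textup{L}^1(0,T;X)$ for a nonseparable Banach space $X=\L(H)$ or $\L(V;V')$, together with the symmetrisation. The latter requires checking that the adjoint map is an isometry and that $\ro'(t)$, $\A_0'(t)$ are a.e.\ symmetric. I expect both to be routine; the cleanest route is to approximate $\ro'$ by step functions, whose values are then symmetrised, and then mollify.
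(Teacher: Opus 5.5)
Your proposal is correct and takes the same route as the paper: approximate the derivative in $\textup{L}^1$ by smooth functions, integrate back from the fixed value at $t=0$, and use the resulting uniform convergence, after discarding finitely many terms, to obtain coercivity with constant $\alpha/2$. Your extra symmetrisation of $\sigma_p$ is a worthwhile refinement that the paper omits, since symmetry of $\ro_p(t)$ and $\A_{0p}(t)$ is needed when Theorem~\ref{thm:existence_prvi} is applied to the approximate problems.
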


\begin{proof}
    Since $\A_{0} \in \textup{W}^{1,1}(0,T;\L(V;V'))$ we may choose a sequence $\Psi_p \in C_c^\infty(0,T;\L(V;V'))$ such that
    \[
        \Psi_p \rightarrow \A_0' \qquad \text{ in } \textup{L}^1(0,T;\L(V;V')).
    \]
    After setting $\A_{0p}(t):=\A_0(0)+\int_0^t \psi_p(s)ds$, we can easily check that $\A_{0p}$ converges strongly to $\A_0$ in $\textup{W}^{1,1}(0,T;\L(V;V'))$ and that $\A_{0p}(0)=\A_0(0)$. Since $\A_{0p}$ also converges to $\A_0$ uniformly on $[0,T]$, we can adjust the sequence if needed so that $\|\A_{0p}-\A_0\|_{\textup{L}^\infty(0,T;\L(V;V'))} < \frac{\alpha}{2}$ for each $p$. Now
    \[
        \innerv{\A_{0p}(t)v}{v} = \innerv{(\A_{0p}(t)-\A_0(t))v}{v}+\innerv{\A_0(t)v}{v} \geq -\frac{\alpha}{2}\|v\|^2_V+\alpha\|v\|_V^2=\frac{\alpha}{2}\|v\|^2_V.
    \]
    Approximating sequences $\ro_p$ and $g_p$ can be constructed in an analogous manner.
\end{proof}

Next, choose a sequence $(u^0_{jp})_p \in V$ such that
\begin{equation}\label{eq:u^0_jp_cvg}
    \begin{split}
        &u^0_{jp} \rightarrow u^0_j \qquad \text{ strongly in } V\\
        &\A_{0}(0)u^0_{jp} - g(0) \in H.
    \end{split}
\end{equation}
Such a sequence can be obtained as follows. As $\A_0(0)$ is an isomorphism of $V$ and $V'$, we have that $\A_0(0)^{-1}H$ is dense in $V$. Hence, we can choose a sequence $(\varphi_p)$ in $\A_0(0)^{-1}H$ such that $\varphi_p \rightarrow u^0_j$ strongly in $V$. Additionally, take a sequence $g_p \in H$ such that $g_p \rightarrow g(0)$ strongly in $V'$. Then the desired sequence is given by $u^0_{jp}:=\varphi_p+\A_0(0)^{-1}(g(0)-g_{p})$.

For each $u^0_{jp},u^1_p,f_p,g_p$ and $F_k$, Theorem \rref{thm:existence_prvi} yields a solution $u_{jpk}$ satisfying
  \begin{center}
        $\begin{cases}
            u_{jpk} \in \textup{L}^\infty(0,T;V)\\
            u_{jpk}'\in \textup{L}^\infty(0,T;V)\\
            u_{jpk}'' \in \textup{L}^2(0,T;H)\\
            \GG_k(u_{jpk}) \in \textup{L}^\infty(0,T; \textup{L}^1(\Omega)),
        \end{cases}$
    \end{center}
of each of the approximate problems
\begin{equation}\label{eq:problem_jk}
    \begin{cases}
    \begin{aligned}
        (\ro_p u_{jpk}')'+\A_p  u_{jpk}+\FF_k(u_{jpk})=&\,f_p+g_p\\
        u_{jpk}(0)=&\,u^0_{jp}\\
        u_{jpk}'(0)=&\,u^1_p,
        \end{aligned}
    \end{cases}
\end{equation}

\subsection{Energy estimates and a solution}

In a similar fashion as in obtaining a priori estimates in the previous section, we first obtain, after denoting
\[
E_{jpk}(t):=\frac{1}{2}(\ro_p u_{jpk}',u_{jpk}')+\frac{1}{2}\langle \A_{0p}u_{jpk},u_{jpk}\rangle + \|\GG_k(u_{jpk})\|_{\textup{L}^1(\Omega)} \;,
\]
the following inequality
\begin{equation*}
   E_{jpk}'(t) \leq \phi_p(t)E_{jpk}(t)+\|f_p(t)\|_H+\langle g_p(t),u_{jpk}'(t)\rangle,
\end{equation*}
where
\[\phi_p(t)=\frac{1}{\alpha/4}\left(\|\ro_p'(t)\|_{\L(H)}+\|\A_{0p}'(t)\|_{\L(V;V')}+\|\A_{1p}(t)\|_{\L(V;H)}+\|f_p(t)\|_H\right).\]
After integrating from $0$ to $t$, where $t\leq T$, we obtain
\begin{equation}\label{eq:energy_jpk}
    E_{jpk}(t) \leq E_{jpk}(0)+\|f_p\|_{\textup{L}^1(H)}+\int_0^t \langle g_p(s),u_{jpk}'(s)\rangle+ \int_0^t \phi_p(s)E_{jpk}(s).
\end{equation}
Since $\|f_p\|$ converges strongly in $\textup{L}^1(0,T;H)$, term $\|f_p\|_{\textup{L}^1(H)}$ is bounded by a constant independent of $j,p$ and $k$. The term involving $g_p$ can be integrated by parts, yielding in the same manner as in \reqref{eq:g_estimates_partial_integration}
\begin{equation*}
    \begin{split}
        \int_0^t \langle g_p(s),u_{jpk}'(s)\rangle ds& = -\int_0^t \langle g_p'(s),u_{jpk}(s)\rangle ds + \langle g_p(t),u_{jpk}(t)\rangle - \langle g_p(0),u_{jpk}(0)\rangle\\[2\jot]
       &\leq 2\kappa\|g_p\|^2_{\textup{W}^{1,1}(V')}+\|g_p'\|_{\textup{L}^1(V')} +\frac{1}{\kappa\alpha}E_{jpk}(t)\\[\jot]
    &\quad +\frac{1}{\kappa\alpha}E_{jpk}(0)+\int_0^t \frac{\|g_p'(s)\|_{V'}}{\alpha/4}E_{jpk}(s)ds.
    \end{split}
\end{equation*}
The first two terms on the right are now bounded due to convergence of $g_p$ in $\textup{W}^{1,1}(0,T;V')$.
The constant $\kappa$ is again chosen so that $\frac{1}{\kappa\alpha} \leq \frac{1}{2}$ holds.

It remains to provide estimates on $E_{jpk}(0)$. Recalling \reqref{eq:u0j_properties}, ($\ref{eq:cvg_fpgp}_1$), Lemma~\rref{lemma:approx} and \reqref{eq:u^0_jp_cvg}, we deduce the boundedness of first two terms in $E_{jpk}(0)$ independent of $j,p,k$. Let us now deal with the term $\|\GG_k(u^0_{jp})\|_{\textup{L}^1(\Omega)}$. Since for each $j \in \N$, $u^0_j$ is a bounded function, $\FF_k(u^0_j)$ converges uniformly in $\Omega$ to $\FF(u^0_j)$ and hence we have
\begin{equation*}
    \begin{split}
        |G_k(u^0_j(x))-G(u^0_j(x))| &\leq \left|\int_0^{u^0_j(x)} F_k(u^0_j(w))-F(u^0_j(w))dw \right|\\[\jot]
        &\leq j \sup_{w \in [-j,j]} |F_k(w)-F(w)| \xrightarrow{k\to\infty} 0, \quad \text{ a.e. } x \in \Omega, 
    \end{split}
\end{equation*}
deducing
\begin{equation*}
    \GG_k(u^0_j) \xrightarrow{k\to\infty} \GG(u^0_j) \quad \text{ in } \textup{L}^\infty(\Omega).
\end{equation*}
As a consequence, since $\Omega$ is bounded, for each $j \in \N$ we have
\[ \|\GG_k(u_j^0) - \GG(u_j^0)\|_{\textup{L}^1(\Omega)} \leq |\Omega|\|\GG_k(u_j^0) - \GG(u_j^0)\|_{\textup{L}^\infty(\Omega)} \xrightarrow{k\to \infty} 0.\]
Hence, we can extract a diagonal subsequence $\GG_{j}(u^0_j):=\GG_{k(j)}(u^0_j)$ of $\GG_k(u^0_j)$ (now denoted simply as $\GG_j$) such that
\[
\int_\Omega |\GG_j(u_j^0) - \GG(u_j^0)| \xrightarrow{} 0.
\]
Furthermore, since $\GG$ is continuous, \reqref{eq:u0_cvg} gives
\[\GG(u_j^0) \longrightarrow \GG(u^0) \text{ a.e.}\]
Now, the sign condition on $\FF$ implies that $\GG$ is nonincreasing on $(-\infty,0)$ and nondecreasing on $(0,\infty)$, which combined with \reqref{eq:u0j_properties} yields $\GG(u^0_j) \leq \GG(u^0)$ a.e. Assumption \reqref{eq:g_iv} therefore allows us to apply the Lebesgue dominated convergence theorem in order to obtain
\[\GG(u^0_j) \longrightarrow \GG(u^0) \quad \text{ in } \textup{L}^1(\Omega).\]
Thus, we conclude
\[
\GG_j(u^0_j) \rightarrow \GG(u^0) \quad \text{strongly in } \textup{L}^1(\Omega) \text{ as well as a.e.}
\]
Finally, from the elementary inequality valid for each $j \in \N$
\[
|G_j(z_2)-G_j(z_1)| \leq \lip(F_j)(|z_1|+|z_2|)|z_2-z_1|,
\]
we have for each $p \in \N$
\[
\|\GG_j(u^0_{jp})-\GG_j(u^0_j)\|_{\textup{L}^1(\Omega)}
\leq \lip(F_j)\left(\|u^0_{jp}\|_H+\|u^0_{j}\|_H\right)\|u^0_{jp}-u^0_{j}\|_H.
\]
Therefore, for each $j \in \N$
\[
\GG_j(u^0_{jp}) \xrightarrow{p \to \infty} \GG_j(u^0_j) \quad \text{ in } \textup{L}^1(\Omega).
\]
We can then once again choose a subsequence of $u^0_{jp}$ denoted by $u^0_{jj}$ such that
\begin{equation*}        %\label{eq:cvg_int_g}
     \GG_j(u_{jj}^0) \longrightarrow  \GG(u^0) \quad \text{ in } \textup{L}^1(\Omega).
\end{equation*}
As a consequence, we also get uniform boundedness of term $\|\GG_j(u^0_j)\|_{\textup{L}^1(\Omega)}$.
Let us now denote the (diagonal) subsequence of $u^0_{jp}$ obtained above with $u_k^0:=u^0_{kk}$ and, accordingly, denote by $u_k$ the sequence corresponding to $u_{kk}$. Taking previous remarks into consideration, upon returning to \reqref{eq:energy_jpk}, we obtain the inequality of the form
\[E_k(t) \lesssim 1+\int_0^t (\phi_k(s)+\|g'(s)\|_{V'})E_k(s)ds,\]
and a simple application of the Gronwall's lemma yields the uniform bound of $E_k$. Thus, we have
\begin{equation}\label{eq:k_bound}\begin{aligned}
    (u_k) \text{ is bounded in }& \textup{L}^\infty(0,T;V)\\[2\jot]
    (u_k') \text{ is bounded in }& \textup{L}^\infty(0,T;H)\\[2\jot]
    \GG_k(u_k) \text{ is bounded in }& \textup{L}^\infty(0,T;\textup{L}^1(\Omega)) \;.
\end{aligned}
\end{equation}
Once again, we can extract a subsequence such that 
\begin{equation}\label{eq:cvg_u_k}
\begin{aligned}
    u_k & \stackrel{\ast}{\rightharpoonup}  u \quad &\text{ in } \textup{L}^\infty (0,T;V)\\
    u_k' & \stackrel{\ast}{\rightharpoonup}  u' \quad &\text{ in } \textup{L}^\infty(0,T;H).
\end{aligned}
\end{equation}

Recall that $u_k$ satisfies
\begin{equation}\label{eq:u_k_equation}
    (\ro_k u_k')'+\A_k u_k + \FF_k(u_k)=f_k+g_k \qquad \text{in } \textup{L}^2(0,T;V').
\end{equation}

We now wish to test this equation against arbitrary $\psi \in \D((0,T)\times \Omega)$ and pass to the limit $k \to \infty$. First we note that the strong convergence $\ro_k \rightarrow \ro$ in $\textup{W}^{1,1}(0,T;\L(H))$ and aforementioned weak-$^*$ convergence of $u_k'$ imply
\begin{equation}\label{eq:cvg_ro_k}
    \ro_ku_k' \xrightharpoonup{\ast} \ro u' \qquad \text{ in } \textup{L}^\infty(0,T;H)   \;.
\end{equation}
Similarly we deduce
\begin{equation}\label{eq:cvg_a_k}
    \begin{split}
         \A_{0k}u_k &\xrightharpoonup{\ast} \A_0u \qquad \text{ in } \textup{L}^\infty(0.T;V')\\[2\jot]
          \A_{1k}u_k &\xrightharpoonup{\ast} \A_1u \qquad \text{ in } \textup{L}^1(0,T;H).
    \end{split}
\end{equation}

It remains to check the convergence of term $\FF_k(u_k)$ in some sense. For this, we refer to \cite[Theorem 1.1]{WS70}, which reads

\begin{theorem}
    Let $\tilde{\Omega}$ be a finite measure space and $X$ and $Y$ Banach spaces. Let $(u_k)$ be a sequence of strongly measurable functions from $\tilde{\Omega}$ to $X$. Let $(\FF_k)$ be a sequence of functions from $X$ to $Y$ such that
    \begin{itemize}
        \item [(a)] $(F_k)$ is uniformly bounded in $Y$ on $B$ for any bounded subset $B$ of $X$,
        \item [(b)] $F_k(u_k(x))$ is strongly measurable and
        \[\sup_k \int_{\tilde{\Omega}} \|u_k(x)\|_X\|F_k(u_k(x))\|_Y dx < \infty,\]
        \item [(c)] $\|F_k(u_k(x))-v(x)\|_Y \rightarrow 0$ a.e.
    \end{itemize}
    Then $v \in \textup{L}^1(\tilde{\Omega};Y)$, and 
    \[\|F_k(u_k)-v\|_{\textup{L}^1(\tilde{\Omega},Y)} \rightarrow 0.\]
\end{theorem}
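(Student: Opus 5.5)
The plan follows the classical Vitali-type argument from the Strauss compactness theorem. First set $v_k:=F_k(u_k)$ and show that $(v_k)$ is \emph{uniformly integrable} in $\textup{L}^1(\tilde\Omega;Y)$. Together with the pointwise convergence (c) and the finiteness of the measure of $\tilde\Omega$, Vitali's convergence theorem will then give strong $\textup{L}^1$ convergence to $v$ and $v\in\textup{L}^1$. Integrability of $v$ itself follows from Fatou's lemma once a uniform $\textup{L}^1$ bound on $\|F_k(u_k)\|_Y$ is available, since $\|v(x)\|_Y=\lim_k\|F_k(u_k(x))\|_Y$ a.e.

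The core step splits $\tilde\Omega$ according to the size of $u_k$. Fix $R>0$ and let $E_{k,R}:=\{x:\|u_k(x)\|_X\le R\}$. On $E_{k,R}$, hypothesis (a) with $B$ the ball of radius $R$ gives $\|F_k(u_k(x))\|_Y\le C_R$ with $C_R$ independent of $k$. On the complement, $\|u_k(x)\|_X>R$, so
\[
\|F_k(u_k(x))\|_Y\le \frac1R\,\|u_k(x)\|_X\|F_k(u_k(x))\|_Y .
\]
Hence, for any measurable $A\subseteq\tilde\Omega$,
\[
\int_A\|F_k(u_k)\|_Y\,dx\le C_R\,|A|+\frac{M}{R},\qquad M:=\sup_k\int_{\tilde\Omega}\|u_k\|_X\|F_k(u_k)\|_Y\,dx<\infty
\]
by (b). Taking $A=\tilde\Omega$ gives the uniform $\textup{L}^1$ bound. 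Given $\varepsilon>0$, choose $R$ with $M/R<\varepsilon/2$, then $\delta=\varepsilon/(2C_R)$, so that $|A|<\delta$ implies $\int_A\|F_k(u_k)\|_Y<\varepsilon$ uniformly in $k$. This is exactly uniform integrability. Strong measurability of $v$ follows as an a.e. limit of the strongly measurable $F_k(u_k)$.

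To conclude, apply Vitali in the Bochner setting. By Egorov's theorem applied to the real functions $\|F_k(u_k)-v\|_Y\to0$, there is a set $A$ of measure less than $\delta$ outside which the convergence is uniform. Then
\[
\int_{\tilde\Omega}\|F_k(u_k)-v\|_Y\le \int_{\tilde\Omega\setminus A}\|F_k(u_k)-v\|_Y+\int_A\|F_k(u_k)\|_Y+\int_A\|v\|_Y .
\]
The first term tends to zero by uniform convergence and finite measure. The second is below $\varepsilon$ by uniform integrability. The third is small by absolute continuity of the integral of $\|v\|_Y\in\textup{L}^1$, after shrinking $\delta$ if needed.

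The main obstacle is the uniform integrability estimate. Its key idea is that the weighted bound (b) controls the region where $u_k$ is large, while (a) controls the region where it is bounded. The rest is standard measure theory.
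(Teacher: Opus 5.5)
Your argument is correct. Splitting $\tilde\Omega$ according to whether $\|u_k\|_X\le R$ gives the uniform integrability bound $\int_A\|F_k(u_k)\|_Y\,dx\le C_R|A|+M/R$, after which Fatou's lemma and Egorov's theorem close the proof as in Vitali's convergence theorem. The paper does not prove this statement itself but quotes it from Strauss \cite{WS70}, and what you have written is essentially the standard proof of that result.
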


We intend to apply this theorem to sequences $(u_k)$ and $(F_k)$ with $\tilde{\Omega}=[0,T] \times \Omega$, $X=Y=\R$.
In order to do so, we need to find a uniform bound (without the absolute value, owing to the sign condition) of the term
\[
    \int_0^T (\FF_k(u_k),u_k)dt \:,
\]
and the corresponding almost-everywhere convergence.

Recalling the equation in \reqref{eq:problem_jk}, multiplying it by $u_k$ and integrating over $[0,T]$ we obtain
\begin{equation*}
    \int_0^T (\FF_k(u_k),u_k)dt= \int_0^T (f,u_k) - \langle (\ro u_k')',u_k\rangle - \langle \A_0u_k,u_k \rangle - \langle \A_1 u_k,u_k \rangle\, dt.
\end{equation*}
After integrating by parts in the term involving $\ro$ we get
\begin{equation*}\label{eq:fkuk}
    \begin{split}
    \int_0^T (\FF_k(u_k),u_k)dt= & \int_0^T (f_k,u_k) + \langle g_k,u_k \rangle dt\\[\jot]
    & -\int_0^T(\ro_k u_k',u_k') + \langle \A_{0k}u_k,u_k \rangle + \langle \A_{1k} u_k,u_k \rangle\, dt \\[\jot]
    &+ (\ro_k u_k'(0),u_k(0))-(\ro_k u_k'(T),u_k(T)),
\end{split}
\end{equation*}
% it follows
% \begin{equation*}
% \begin{split}
%     \int_0^T (\FF_k(u_k),u_k)dt \leq \,&\|f\|_{\textup{L}^1(H)}(1+\|u_k\|^2_{\textup{L}^\infty(H)})+\|g\|_{\textup{L}^2}\|u_k\|_{\textup{L}^\infty(H)}\\[\jot]
%     &+\|\ro\|_{\textup{L}^\infty(\L(H))}\|u_k'\|^2_{\textup{L}^2(H)}+\|\A_0\|_{\textup{L}^\infty(\L(V;V'))}\|u_k\|_{\textup{L}^2(V)}^2\\[\jot]
%     &+\|\A_1\|_{\textup{L}^\infty(\L(V;H))}\|u_k\|_{\textup{L}^2(V)}\|u_k\|_{\textup{L}^2(H)} \\[\jot]
%     &+2\|\ro u_k'\|_{\textup{L}^\infty(H)
%     }\|u_k\|_{\textup{L}^\infty(H)}.
% \end{split}
% \end{equation*}
from where the uniform bound easily follows. Since $\FF_k$ converges to $\FF$ pointwise, we have
\[\FF_k(u_k)\rightarrow \FF(u_k) \qquad \text{a.e. in } (0,T)\times \Omega.\]
Because of compact injection $V \xhookrightarrow{} H$, an application of the Aubin–Lions lemma 
yields the strong convergence
\[u_k \rightarrow u, \quad \text{ in } \textup{L}^2((0,T)\times\Omega),\]
which, after possibly extracting another subsequence, yields
\begin{equation*}\label{eq:ae_cvg}
    u_k \rightarrow u \quad \text{a.e. in } (0,T) \times \Omega.
\end{equation*}
Since $F$ is continuous, this also implies
\[\FF(u_k) \rightarrow \FF(u) \quad \text{a.e. in } (0,T) \times \Omega. \]
Therefore, we conclude that
\begin{equation}\label{eq:cvg_fkuk_ae}
    \FF_k(u_k) \rightarrow \FF(u) \quad \text{a.e. in } (0,T) \times \Omega.
\end{equation}
We are now in a position to apply the aforementioned theorem to conclude
\begin{equation}\label{eq:cvg_fkuk}
    \FF_k(u_k) \longrightarrow \FF(u) \text{ strongly in } \textup{L}^1((0,T) \times \Omega).
\end{equation}
Now take $\varphi \in \D(0,T)$ and $v \in \textup{L}^\infty(\Omega) \cap V$ and multiply (in the sense of duality between $\textup{L}^2(0,T;V')$ and $\textup{L}^2(0,T;V))$ the equation \reqref{eq:u_k_equation} with $\varphi \boxtimes v$. Using convergences \reqref{eq:cvg_u_k}, \reqref{eq:cvg_ro_k}, \reqref{eq:cvg_a_k}, \reqref{eq:cvg_fkuk} and \reqref{eq:cvg_fpgp} we obtain
\begin{equation*}
    \int_0^T -(\ro u',v)\varphi' + \langle \A_0 u,v\rangle \varphi + (\A_1 u, v)\varphi + (\FF(u),v)\varphi \; dt = \int_0^T (f,v)\varphi+\langle g,v\rangle \varphi.
\end{equation*}
Since $v \in \textup{L}^\infty(\Omega) \cap V$ was arbitrary, we conclude
\begin{equation*}
    (\ro u')'=f+g-\A u-\FF(u) \qquad \text{in } \D'(0,T; \textup{L}^1(\Omega)+V')
\end{equation*}
and therefore,
\begin{equation*}   %\label{eq:ro_u'_space}
    (\ro u')' \in \textup{L}^1(0,T;\textup{L}^1(\Omega)+V'),
\end{equation*}
with the equation
\begin{equation*}
    (\ro u')'+\A u+\FF(u)=f+g
\end{equation*}
now valid in the sense of $\textup{L}^1(0,T;\textup{L}^1(\Omega)+V').$

Furthermore, as in \reqref{eq:cvg_fkuk_ae} we have
\[
    \GG_k(u_k) \rightarrow \GG(u) \qquad \text{a.e. in } (0,T)\times \Omega,
\]
and an application of the Fatou's lemma yields together with ($\ref{eq:k_bound}_3$) 
\[
    \|\GG(u)\|_{\textup{L}^\infty(0,T;\textup{L}^1(\Omega))} \leq \liminf_k{\|\GG_k(u_k)\|_{\textup{L}^\infty(0,T;\textup{L}^1(\Omega))}} \lesssim 1.
\]

Initial conditions in the sense of Remark \rref{rem:initial_conditions} are now verified in the standard manner.

\bigskip

\subsection{Uniqueness and some improvements in a special case}

Some improvements on the solution, including the uniqueness, can be obtained by imposing additional conditions.

\begin{theorem}\label{prop:uniquness_cont_nonlinear}
    In the case $d \geq 3$, assume additionally that $\A_1$ satisfies
\begin{equation}   \label{eq:a1_uniq}
    \A_1 \in \textup{L}^1(0,T;\L(H,V')),
\end{equation}
    and $F$ is a function of class $\textup{C}^1$ satisfying growth condition
\begin{equation*}       %\label{eq:growth}
    |F'(z)| \leq C|z|^p, 
\end{equation*}
    for some $\frac{1}{d} \leq p \leq \frac{2}{d-2}$. Then the solution of \reqref{eq:thm_drugi} is unique and moreover, the equality
    \[
         (\ro u')'+\A u+\FF(u)=f+g
    \]
    holds in the sense of $\textup{L}^1(0,T;V')$.
\end{theorem}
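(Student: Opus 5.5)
First I will show that under the growth condition $\FF(u)\in \textup{L}^\infty(0,T;V')$ for every $u\in \textup{L}^\infty(0,T;V)$. Since $F(0)=0$, the growth condition gives $|F(z)|\le C|z|^{p+1}$. I will use the Sobolev embedding $V\hookrightarrow \textup{L}^{2^*}(\Omega)$ with $2^*=\frac{2d}{d-2}$ and its dual $\textup{L}^{(2^*)'}(\Omega)\hookrightarrow V'$, where $(2^*)'=\frac{2d}{d+2}$. It then suffices to have $\|F(u)\|_{\textup{L}^{(2^*)'}}\lesssim \|u\|^{p+1}_{\textup{L}^{(p+1)(2^*)'}}$ with $(p+1)\frac{2d}{d+2}\le \frac{2d}{d-2}$, i.e.\ $p\le \frac{4}{d-2}$. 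This holds since $p\le\frac{2}{d-2}$, and $\Omega$ is bounded. Together with \reqref{eq:a1_uniq} (which makes $\A_1 u\in \textup{L}^1(0,T;V')$) and the remaining terms, the identity in \reqref{eq:thm_drugi} then has every term except $(\ro u')'$ in $\textup{L}^1(0,T;V')$. Hence $(\ro u')'\in \textup{L}^1(0,T;V')$, and the equation holds in $\textup{L}^1(0,T;V')$. This is the second claim.

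For uniqueness, let $u_1,u_2$ be two solutions and set $w=u_1-u_2$. Then $w\in \textup{L}^\infty(0,T;V)$, $w'\in \textup{L}^\infty(0,T;H)$, $w(0)=w'(0)=0$, and
\[(\ro w')'+\A w=-(\FF(u_1)-\FF(u_2)).\]
Testing with $w'$ is illegitimate at this regularity. I will therefore use Ladyzhenskaya's (Lions--Magenes) trick. Fix $s\in(0,T]$ and set $v(t)=-\int_t^s w(\tau)\,d\tau$ for $t\le s$ and $v=0$ for $t>s$. Then $v\in \textup{W}^{1,\infty}(0,T;V)$, $v'=w$ on $(0,s)$ and $v(s)=0$.

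Test the equation against $v$ on $(0,s)$:
\begin{itemize}
\item The $\ro$ term, after integration by parts using $w'(0)=0$ and $v(s)=0$, gives $-\int_0^s(\ro w',w)=-\frac12(\ro w,w)(s)+\frac12\int_0^s(\ro' w,w)$.
\item The $\A_0$ term, written via $w=v'$ and using symmetry, gives $\frac12\langle\A_0 v,v\rangle\big|_0^s-\frac12\int_0^s\langle\A_0'v,v\rangle=-\frac12\langle\A_0(0)v(0),v(0)\rangle-\frac12\int_0^s\langle \A_0'v,v\rangle$.
\end{itemize}
Together this yields
\[\tfrac12(\ro(s)w(s),w(s))+\tfrac12\langle\A_0(0)v(0),v(0)\rangle\le \tfrac12\int_0^s\bigl|(\ro'w,w)\bigr|+\bigl|\langle\A_0'v,v\rangle\bigr| +\int_0^s |\langle \A_1 w,v\rangle| +\int_0^s|\langle \FF(u_1)-\FF(u_2),v\rangle|.\]
Here \reqref{eq:a1_uniq} is precisely what allows $|\langle\A_1 w,v\rangle|\le\|\A_1\|_{\L(H,V')}\|w\|_H\|v\|_V$, since only $H$-control on $w$ is available.

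The main obstacle is the nonlinear term, which also needs a bound of the form $\|w\|_H\|v\|_V$. By the mean value theorem, $|F(u_1)-F(u_2)|\le C(|u_1|^p+|u_2|^p)|w|$. Hölder's inequality with exponents $\frac{d}{1}$, $2$ and $2^*$ (note $\frac1d+\frac12+\frac{d-2}{2d}=1$) gives
\[|\langle \FF(u_1)-\FF(u_2),v\rangle|\lesssim \bigl(\|u_1\|^p_{\textup{L}^{pd}}+\|u_2\|^p_{\textup{L}^{pd}}\bigr)\|w\|_H\|v\|_{\textup{L}^{2^*}}.\]
The condition $pd\le 2^*$, i.e.\ $p\le\frac{2}{d-2}$, makes the first factor bounded by $\|u_i\|^p_{\textup{L}^\infty(V)}$. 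The lower bound $p\ge 1/d$ ensures $pd\ge1$, so that $\textup{L}^{pd}$ is a genuine norm; for smaller $p$ one would use that $\Omega$ is bounded.

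To close the argument, note that $\langle\A_0(0)v(0),v(0)\rangle$ controls only $v(0)$, not $v(t)$. I will handle this with the standard auxiliary function $z(t)=\int_0^t w$, so that $v(t)=z(t)-z(s)$, and bound
\[\|v(t)\|_V^2\le 2\|z(s)\|_V^2+2\|z(t)\|_V^2,\qquad \|z(s)\|_V=\|v(0)\|_V.\]
Using $\|w\|_H\|v\|_V\le \varepsilon\|v\|_V^2+C_\varepsilon\|w\|_H^2$, and choosing $s$ small so that $\int_0^s$ of the $\textup{L}^1$ coefficients is small, the $\|z(s)\|_V^2$ contributions are absorbed into the left-hand side. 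This leaves
\[\|w(s)\|_H^2+\|z(s)\|_V^2\lesssim\int_0^s\phi(t)\bigl(\|w(t)\|_H^2+\|z(t)\|_V^2\bigr)\,dt\]
with $\phi\in \textup{L}^1$. Gronwall's inequality gives $w=0$ on a short interval. Since the smallness only depends on $\textup{L}^1$ moduli of the coefficients, iterating over finitely many subintervals yields $w\equiv0$ on $[0,T]$.
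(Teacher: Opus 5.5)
Your proof is correct. For uniqueness you follow the paper's argument almost verbatim: the same test function $v$ (up to a harmless sign convention), the auxiliary primitive $z(t)=\int_0^t w$, absorption of $\|z(s)\|_V^2$ on a short interval, Gronwall, and iteration.

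Your H\"older step is in fact the sound version of the paper's. You put $w$ in $\textup{L}^2$ and $v$ in $\textup{L}^{2^*}$. The paper instead puts the difference in $\textup{L}^{q}$, $q=\frac{2d}{d-2}$, and then uses $\|u(t)\|_{\textup{L}^q(\Omega)}\le\|u(t)\|_H$, which goes the wrong way on a bounded domain.

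The genuine difference is in the claim that the equation holds in $\textup{L}^1(0,T;V')$. The paper re-runs the existence proof without approximating $F$ or $u^0$. It uses $\FF(v)\in H$ for $v\in V$ and Lions' Lemma 1.3 to obtain $\FF(u_k)\rightharpoonup\FF(u)$ weakly in $\textup{L}^2(0,T;H)$. You instead argue a posteriori: for any $u\in\textup{L}^\infty(0,T;V)$, all terms other than $(\ro u')'$ already lie in $\textup{L}^1(0,T;V')$, hence so does $(\ro u')'$.

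Your route is shorter and applies to every solution of \eqref{eq:thm_drugi}. That is exactly what the uniqueness argument requires: both $u_1$ and $u_2$ must satisfy the equation in $\textup{L}^1(0,T;V')$ before they can be tested against a $V$-valued $v$. The paper's route, as written, certifies this only for the constructed limit. It does, however, give as a by-product an existence proof under the growth condition that needs neither the truncation of $u^0$ nor the approximation of $F$.

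A minor point: $\A_1u\in\textup{L}^1(0,T;H)$ already follows from $\A_1\in\textup{L}^1(0,T;\L(V;H))$. So \eqref{eq:a1_uniq} is needed only in the uniqueness step, not in your first step.
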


\begin{proof}
The improvement on the space in which the equation holds can be obtained by repeating the steps of the proof of the Theorem \rref{thm:existence_drugi}; we shall just give a brief sketch of required adjustments. 

In this case, the approximation of the nonlinearity is not needed; neither approximating $u^0$ with a sequence of bounded functions is required.
After an approximation of the linear part and initial conditions is completed (in particular, we only need sequences indexed by one parameter, hereafter $k$), we obtain the unique solution $u_k \in \textup{L}^\infty(0,T;V) \cap \textup{W}^{1,\infty}(0,T;H)$ of
\[
    \begin{cases}
    \begin{aligned}
        (\ro_k u_{k}')'+\A_k  u_{k}+\FF(u_{k})=&\,f_k+g_k\\
        u_{k}(0)=&\,u^0_{k}\\
        u_{k}'(0)=&\,u^1_k,
        \end{aligned}
    \end{cases}
\]
After denoting the energy at time $t$
\[
    E_k(t):=\frac{1}{2}\innerh{\ro_k(t)u_k(t)}{u_k(t)}+\frac{1}{2}\innerv{\A_{0k}(t)u_k(t)}{u_k(t)}+\|\GG(u_k(t))\|_{\textup{L}^1(\Omega)},
\]
the energy estimates and an accumulation point $u \in \textup{L}^\infty(0,T;V)\cap \textup{W}^{1,\infty}(0,T;H)$ are obtained in an
analogous fashion, taking into account the following: The growth condition, together with the sign condition imply
\[
    |F(z)| \lesssim 1+|z|^{\frac{d}{d-2}}, \qquad z \in \R.
\]
Therefore, for any $v \in V$ we have 
\[
    \|\FF(v)\|_H \lesssim 1+\|v\|_{\textup{L}^{\frac{2d}{d-2}}(\Omega)} \lesssim 1+ \|v\|_V,
\]
so $\FF(v) \in H$. Similarly, $\GG(v) \in \textup{L}^1(\Omega)$. The final difference lies in the examination of the convergence of the term $\FF(u_k)$. Here we apply Lemma 1.3 from \cite{L69} to that sequence and obtain
\[
    \FF(u_k) \xrightharpoonup{} \FF(u) \qquad \text{ weakly in } \textup{L}^2(0,T;H).
\]
It is then straightforward to verify that $u$ satisfies
\[
    (\ro u')'+\A u + \FF(u)=f+g \qquad \text{ in the sense of } \textup{L}^1(0,T;V').
\]

For uniqueness, assume $u_1,u_2$ are two solutions and denote $u=u_1-u_2$. Since there is not enough regularity in $u'$ to use it as a test function, we circumvent this problem using the standard argument. Fix $0 < s < T$ and define
\[
    v(t):=\begin{cases}
    -\int_s^t u(\tau)d\tau, & 0 \leq t \leq s,\\
    0, & s < t \leq T.
\end{cases}
\]
Then $v \in \textup{W}^{1,\infty}(0,T;V)$, so we get
\[
\int_0^s \langle (\ro u')',v \rangle + \langle \A_0 u,v\rangle+(\A_1u,v)+(\FF(u_1)-\FF(u_2),v) \, dt =0.
\]
Since $(\ro u')(0)=v(s)=0$, integrating by parts in the first term yields
\[
    \int_0^s -( \ro u',v' ) + \langle \A_0 u,v \rangle+(\A_1u,v)+(\FF(u_1)-\FF(u_2),v) \, dt =0.
\]
As $v'=-u$ for $0 \leq t \leq s$, we can write
\[
    \int_0^s (\ro u',u)-\langle \A_0 v',v \rangle+( \A_1u,v) +(\FF(u_1)-\FF(u_2),v) \, dt=0.
\]
It follows that
\begin{align*}
    (\ro(s)u(s),u(s))+\langle \A_0(0)v(0),v(0)\rangle =& \int_0^s \frac{d}{dt}\frac{1}{2}\Big((\ro u,u)-\langle \A_0 v,v \rangle \Big) \, dt\\ 
    = &\int_0^s  -(\ro'u,u)-\langle \A_0' v,v\rangle+(\A_1u,v) -(\FF(u_1)-\FF(u_2),v)\, dt.
\end{align*}
The left hand side can then be estimated from below:
\begin{equation*}
    \|u(s)\|_H^2+\|v(0)\|_V^2 \leq \frac{1}{\alpha}\left((\ro(s)u(s),u(s))+\langle \A_0(0)v(0),v(0)\rangle\right).
\end{equation*}
For the right hand side, we first deal with the nonlinear part. The growth condition  and the Mean-value theorem yield the inequality
    \[|\FF(z_1)-\FF(z_2)| \leq C\max\{|z_1|^p,|z_2|^p\}|z_1-z_2|.\]
    Therefore, the extended Hőlder inequality applied for $\frac{1}{q}+\frac{1}{d}+\frac{1}{2}=1$ gives
    \begin{align*}
       \int_0^s (\FF(u_1)-\FF(u_2),v) dt \, \lesssim &\int_0^s (\max\{|u_1|^{p},|u_2|^p\}|u_1-u_2|,|v|) dt\\
       \lesssim & \int_0^s \left((|u_1|^p+|u_2|^p)|u_1-u_2|,|v|\right)dt\\
       \lesssim & \int_0^s \left\| |u_1(t)|^p+|u_2(t)|^p \right\|_{\textup{L}^{d}(\Omega)}\,\|u(t)\|_{\textup{L}^q(\Omega)}\,\|v(t)\|_H \,dt.
    \end{align*}
    Since $q> 2$, we have $\|u(t)\|_{\textup{L}^q(\Omega)} \leq \|u(t)\|_H$. Next, since $1 \leq dp\leq q=\frac{2d}{d-2}$, the Sobolev embedding theorem gives
    \begin{align*}
         \| |u_1(t)|^p+|u_2(t)|^p \|_{\textup{L}^d(\Omega)} \,\leq & \,\| |u_1(t)|^p \|_{\textup{L}^d(\Omega)}+\||u_2(t)|^p\|_{\textup{L}^d(\Omega)}\\[2\jot]
         \lesssim& \|u_1(t)\|_V^p+\|u_2(t)\|_V^p\\[2\jot]
         \lesssim& \|u_1\|^p_{\textup{L}^\infty(0,T;V)}+\|u_2\|^p_{\textup{L}^\infty(0,T;V)}.
    \end{align*}
    Finally, we obtain
    \begin{align*}
        \int_0^s &\| |u_1(t)|^p+|u_2(t)|^p \|_{\textup{L}^{d}(\Omega)}\,\|u(t)\|_{\textup{L}^q(\Omega)}\,\|v(t)\|_H \,dt \\
        &\lesssim \left(\|u_1\|^p_{\textup{L}^\infty(0,T;V)}+\|u_2\|^p_{\textup{L}^\infty(0,T;V)}\right)\int_0^s \|u(t)\|_H\|v(t)\|_H\,dt\\
        &\lesssim \left(\|u_1\|^p_{\textup{L}^\infty(0,T;V)}+\|u_2\|^p_{\textup{L}^\infty(0,T;V)}\right)\int_0^s \|u(t)\|_H^2+\|v(t)\|_V^2\,dt. 
    \end{align*}
Recalling \reqref{eq:a1_uniq}, we further estimate
\begin{align*}
    \int_0^s  &-(\ro'u,u)-\langle \A_0' v,v\rangle+(\A_1u,v) \\
    &\leq \int_0^s \|\ro'(t)\|_{\L(H)}\|u\|_H^2+\|\A_0'(t)\|_{\L(V;V')}\|v(t)\|_V^2+\|\A_1(t)\|_{\L(V;H)}\|u\|_H\|v\|_V \,dt\\
    &\leq \int_0^s \left(\|\ro'(t)\|_{\L(H)}+\|\A_0(t)\|_{\L(V;V')}+\|\A_1(t)\|_{\L(V;H)}\right)\left(\|u(t)\|_H^2+\|v(t)\|^2_V\right) \,dt \;.
\end{align*}
Combining it all together yields (for some constant $C>0$)
\begin{align}\label{eq:10}
    \|u(s)\|_H^2+\|v(0)\|_V^2 \leq C\int_0^s \beta(t)\left(\|u(t)\|_H^2+\|v(t)\|_V^2\right) \,dt, 
\end{align}
where $\beta$ denotes the (positive) function
\[
    \beta(t)=\|\ro'(t)\|_{\L(H)}+\|\A_0(t)\|_{\L(V;V')}+\|\A_1(t)\|_{\L(V;H)}+\lip (F)\;.
\]
Note that we have $\beta \in \textup{L}^1(0,T)$.

For $0\leq t \leq T$ define 
$$w(t):=\int_0^t u(\tau)d\tau \;.$$
The inequality in \reqref{eq:10} then becomes
\begin{equation*}           %\label{eq:11}
    \|u(s)\|_H^2+\|w(s)\|_V^2 \leq C\int_0^s \beta(t)\left(\|u(t)\|_H^2+\|w(s)-w(t)\|_V^2\right)\, dt.
\end{equation*}
However
$$
\|w(t)-w(s)\|_V^2 \leq 2(\|w(t)\|_V^2+\|w(s)\|_V^2) \:,
$$
so continuing the above estimate we get
\begin{equation*}
    \|u(s)\|_H^2+(1-2C\|\beta\|_{\textup{L}^1(0,s)})\|w(s)\|_V^2 \leq C \int_0^s \beta(t)\left(\|u(t)\|_H^2 + \|w(t)\|_V^2\right)\, dt.
\end{equation*}
Choose $T_1$ such that $\left(1-2C\|\beta\|_{\textup{L}^1(0,T_1)}\right) \geq \frac{1}{2}$. Then for each $0 \leq s \leq T_1$ we have
\begin{equation*}
    \|u(s)\|_H^2+\|w(s)\|_V^2 \leq C \int_0^s \beta(t)\left(\|u(t)\|_H^2 + \|w(t)\|_V^2\right)\, dt .
\end{equation*}
Applying Gronwall's inequality we get $u \equiv 0$ on $[0,T_1]$.
By using the same argument on intervals $[T_1,2T_1]$, $[2T_1,3T_1]$, etc.~we eventually get $u \equiv 0$ on $[0,T]$. 
\end{proof}

\section{Some examples and extensions}

To illustrate the results of the previous section on a concrete example, let us consider the problem
\begin{equation}       \label{exa:eq}
(\rho u')'-\div(\textbf{A}\nabla u)+\mathbf{b}\cdot \nabla u + cu +u|u|^{p}=f+g
\end{equation}
with prescribed initial conditions
\[
    u(0)=u^0, \qquad u'(0)=u^1.
\]
In order for this problem to fit within the established framework, we impose the following assumptions:
\begin{itemize}
\item Function $\rho$ belongs to the space $\textup{W}^{1,1}(0,T;\textup{L}^\infty(\Omega))$ and it satisfies $\rho \geq \alpha >0$ for a.e. on $(0,T)\times \Omega$.
\item We write $\textbf{A}$ as the sum of its symmetric and antisymmetric parts,
\[
    \mathbf{A_s}=\frac{1}{2}(\mathbf{A}+\mathbf{A}^T), \qquad 
    \mathbf{A_a}=\frac{1}{2}(\mathbf{A}-\mathbf{A}^T).
\]
We assume that the symmetric part satisfies
\[
    \mathbf{A_s} \in \textup{W}^{1,1}(0,T; \textup{L}^\infty(\Omega; \R^{d\times d})) 
    \quad \text{and} \quad 
    \mathbf{A_s}(t,x) \geq \alpha \mathbf{I}.
\]
For the antisymmetric part, we assume that
\[
    \div \mathbf{A_a} \in \textup{L}^1((0,T)\times \Omega; \R^{d}),
\]
where $\div \mathbf{A_a}$ is understood as the vector in $\R^d$ whose entries are the (standard) divergences of the corresponding columns of $\mathbf{A_a}$.

\item The coefficients in the lower-order terms satisfy
\[
    \mathbf{b} \in \textup{L}^{1}((0,T)\times \Omega;\R^d), 
    \quad 
    c \in \textup{L}^{1}((0,T)\times \Omega).
\]

\item The right-hand side of the equation is given by
\[
    f \in \textup{L}^{1}(0,T;H), 
    \qquad 
    g \in \textup{W}^{1,1}(0,T;V').
\]

\item The initial conditions satisfy $u^0 \in V$ and $u^1 \in H$.
\end{itemize}
We may now define appropriate operators $\ro$ and $\A$. Clearly, $\ro$ corresponds to multiplication by $\rho$, namely
\[
    \ro(t)v = \rho(t)v, \qquad v \in H.
\]
Under our assumptions, $\rho$ satisfies ($\ref{eq:assumptions_drugi}_1$). Next, we identify the operator $\A$ as
\[
    \A(t)v = -\div(\mathbf{A}(t)\nabla v)+\mathbf{b}(t)\cdot \nabla v + cv.
\]
Using the decomposition $\mathbf{A}=\mathbf{A_s}+\mathbf{A_a}$, we further define, for $v \in V$,
\begin{equation*}
    \begin{split}
        \A_0(t)v&=-\div(\mathbf{A_s}\nabla v),\\
        \A_1(t)v&=-\div(\mathbf{A_a} \nabla v)+\mathbf{b}\cdot \nabla v + cv.
    \end{split}
\end{equation*}
Once again, it follows immediately from the preceding assumptions that $\A_0$ satisfies ($\ref{eq:assumptions_drugi}_2$).

Additionally, the part of the operator $\A_1$ containing the terms with $\mathbf{b}$ and $c$ belongs to $\textup{W}^{1,1}(0,T;\L(V,H))$. To verify that the same holds for the divergent part, we take $u \in \D(\Omega)$ and $v \in H$ to obtain
\begin{equation*}
    \begin{split}
        |(-\div (\mathbf{A_a}(t)\nabla u), v)| 
        &\leq |((-\div \mathbf{A_a}(t)) \cdot \nabla u, v)|
        +| \langle {\mathbf{A_a}(t) \cdot \nabla^2 u},v\rangle|\\
        &= |((-\div \mathbf{A_a}(t)) \cdot \nabla u, v)|\\[4\jot]
        &\leq \|\div \mathbf{A_a}(t)\|_{\textup{L}^\infty}\|u\|_V\|v\|_H,
    \end{split}
\end{equation*}
which shows that $-\div(\mathbf{A_a} \nabla \cdot \;)$ belongs to $\textup{L}^1(0,T;\L(V,H))$.
(In the equality above we have just noted that the dot product of antisymmetric $\mathbf{A_a}$ and symmetric Hessian is zero.)
Moreover, same argument shows that it also belongs to the space $\textup{L}^1(0,T;\L(H,V'))$. Since the same obviously holds for $v \mapsto \mathbf{b}\cdot \nabla v + cv$, we also conclude $\A_1 \in L^1(0,T;\L(H,V'))$.

Finally, in this case the nonlinear part is given by the smooth function $F(z)=z|z|^p$, for which the minimal assumption is $p>0$. Note that its primitive is then
\[
    G(z)=\frac{1}{p+2}|z|^{p+2}.
\]
As such, $F$ satisfies the sign condition and
\[
    |F'(z)|=(p+1)|z|^p.
\]
If no further assumptions on $p$ are imposed, it suffices to require $u^0 \in \textup{L}^{p+2}(\Omega)$. Theorem \ref{thm:existence_drugi} then applies, yielding the existence of a function
\[
    u \in \textup{L}^\infty(0,T;V \cap \textup{L}^{p+2}(\Omega)), 
    \qquad 
    u' \in \textup{L}^\infty(0,T;H),
\]
such that \eqref{exa:eq} holds in the sense of $\textup{L}^1(0,T; V'+\textup{L}^1(\Omega))$.

If we additionally assume $\frac{1}{d} \leq p \leq \frac{2}{d-2}$, then the solution is unique, and the equation holds in the sense of $\textup{L}^1(0,T;V')$.

\section{Acknowledgments}

Matko Grbac has been supported in part by Croatian Science Foundation under the project IP-2022-10-7261.
\vfill
%\eject

%Additional references  

%\cite{AM07, HB73, HB11, BM10, CD11, LE90, GT01, MMP2008, SM73, WS66, LT97, LT01, LT02, LT17, LT23, JW87}

%% For citations use: 
%%       \citet{<label>} ==> Lamport (1994)
%%       \citep{<label>} ==> (Lamport, 1994)
%%

%% If you have bib database file and want bibtex to generate the
%% bibitems, please use
%%

%\bibliographystyle{elsarticle-num}
\bibliography{elsarticle/bibliography.bib}

%% else use the following coding to input the bibitems directly in the
%% TeX file.

%% Refer following link for more details about bibliography and citations.
%% https://en.wikibooks.org/wiki/LaTeX/Bibliography_Management

%% For authoryear reference style
%% \bibitem[Author(year)]{label}
%% Text of bibliographic item

\end{document}